\begin{document}

\title[Invariance of plurigenera for g-pairs with abundant nef parts]{Invariance of plurigenera for generalized polarized pairs with abundant nef parts}

\subjclass[2010]{32Q99, 14E99}

\begin{abstract}
We show the invariance of plurigenera for generalized polarized pairs with abundant nef parts and generalized canonical singularities. This is obtained by investigating a type of newly introduced multiplier ideal sheaf which is of bimeromorphic nature.
\end{abstract}

\author{Zhan Li}
\address[Zhan Li]{Department of Mathematics, Southern University of Science and Technology, 1088 Xueyuan Rd, Shenzhen 518055, China} \email{lizhan@sustech.edu.cn}

\author{Zhiwei Wang}
\address[Zhiwei Wang]{School of Mathematical Sciences, Beijing Normal University, Beijing 100875, China} \email{zhiwei@bnu.edu.cn}

\maketitle

\setcounter{tocdepth}{1}
\tableofcontents

\section{Introduction}

Let $X \to \De$ be a smooth projective family of complex manifolds over a disc. A celebrated result of Siu (see \cite{Siu98, Siu02}) asserts that the plurigenera $h^0(X_t, \Oo_{X_t}(mK_{X_t}))$ is a constant function of $t \in \De$ for each positive integer $m$. Results of such type have been investigated for a long time (see \cite{Kaw99b} for a survey) and have been generalized in many directions after Siu's work (see \cite{Pau07, Tak07, BB12, RT20}, etc.). Invariance of plurigenera is an important ingredient in the proof of the boundedness of moduli of general type varieties (see \cite[Theorem 1.8]{HMX13}, \cite[\S 2.3]{HMX18}). Moreover, Siu's method has a profound impact on problems such as deformations of canonical singularities (see \cite{Kaw99}) and the existence of good minimal models (see \cite{DHP13}).

Let $Y$ be a smooth projective variety and $\mu: Y \dto X$ be the canonical model of $Y$ (see \cite[Definition 3.6.5]{BCHM10}, \cite[Definition 2.1]{Li20b}). It is natural to ask whether $X$ (birationally) belongs to a bounded moduli space given $Y$ satisfying certain restrictions (see \cite{FS20}, \cite[Conjecture 1.2]{Li20a}). When $Y$ is of general type, then the answer is affirmative if $Y$ has a fixed dimension and a bounded volume (see \cite{HMX13, HMX18}). To study the general situation, one can adopt the same strategy as \cite{HMX13, HMX18}. In this case, the base variety $X$ admits a generalized polarized pair structure instead of a log pair structure (see Definition \ref{def: gpair}). Roughly speaking, there is a triple $(X, B+M)$ with $B, M$ divisors on $X$. Here $B \geq 0$ accounts for the singularities of the fibers, and $M$ is the push-forward of a nef and abundant divisor which accounts for the moduli of the fibers. It is this additional $M$ that ruins the straightforward applications of the results in \cite{HMX13, HMX18}. For one thing, $M$ is only well-defined up to linear equivalence.

More generally, if $Y \to X$ is a $K_Y$-trivial fibration, then $X$ also admits a generalized polarized pair structure. In fact, the concept of generalized polarized pairs originates from such observations (see \cite{Bir20}). Note that in the definition of generalized polarized pairs, the nef part is just assumed to be the push-forward of a nef divisor instead of a nef and abundant divisor. However, the latter case is more meaningful in geometry, and our main motivation is to study the invariance of plurigenera in such setting:

\begin{theorem}\label{thm: sing extension}
Let $\pi: X \to \De$ be a projective contraction from a  complex space $X$ to the disc $\De$. Assume that $X$ has canonical singularities. Let $L$ be a Cartier divisor on $X$ and $h$ be a metric for $\Oo_X(L)$ with non-negative curvature current. Let $X_0\subset X$ be the fiber over $0\in\De$. Suppose that $X_0$ has canonical singularities and $h|_{X_0}$ is well-defined. Then for each $m\in \Nn$, any section of
\[
H^0(X_0, \Oo_{X_0}(mK_{X_0}+L|_{X_0}) \otimes \Gg_m(h|_{X_0}))
\] extends over $X$ (i.e. any section has a preimage in $H^0(X, \Oo_X(mK+L))$ under the map in Lemma \ref{le: meaning of extension}).
\end{theorem}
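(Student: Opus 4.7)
The plan is to reduce to the familiar smooth setting by passing to a simultaneous log resolution $f\colon \tilde X \to X$, run the analytic Siu--P\u{a}un extension machinery on the smooth total space, and then descend using the canonical singularities hypothesis. The whole approach rests on the bimeromorphic nature of $\Gg_m$: this property is precisely what should guarantee that the hypothesis on $(X_0, h|_{X_0})$ encodes an honest analytic $L^2$ condition for a semi-positively curved metric on a smooth model of $X_0$, which is the natural input for the Ohsawa--Takegoshi $L^2$-extension theorem.

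\textbf{Execution.} I would first choose a projective bimeromorphic morphism $f\colon \tilde X \to X$ such that $\tilde X$ is smooth, the strict transform $\tilde X_0$ of $X_0$ is smooth, and $f|_{\tilde X_0}$ is a log resolution of $X_0$. Since both $X$ and $X_0$ have canonical singularities, the discrepancies $K_{\tilde X / X} = E$ and $K_{\tilde X_0 / X_0} = E_0$ are effective and $f$-exceptional. Pulling back the data gives a metric $\tilde h = f^*h$ with semi-positive curvature current on $f^*L$. By the bimeromorphic design of $\Gg_m$, any section $s \in H^0(X_0, \Oo_{X_0}(mK_{X_0}+L|_{X_0}) \otimes \Gg_m(h|_{X_0}))$ should translate into a section $\tilde s \in H^0(\tilde X_0, \Oo_{\tilde X_0}(mK_{\tilde X_0} + f^*L|_{\tilde X_0}))$ lying in the usual analytic multiplier ideal attached to a canonically constructed, semi-positively curved metric on $mK_{\tilde X_0} + f^*L|_{\tilde X_0}$. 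I then invoke the smooth Siu extension theorem with twisting (as refined by P\u{a}un and Takayama): inductively build metrics $H_k$ on $kK_{\tilde X} + f^*L$ out of already extended sections of smaller multiples, combine with $\tilde h$, and apply Ohsawa--Takegoshi to extend $\tilde s$ across $\tilde X$. This produces $\tilde S \in H^0(\tilde X, \Oo_{\tilde X}(mK_{\tilde X}+f^*L))$ restricting to $\tilde s$. Finally, since $K_{\tilde X / X} = E$ is effective and $f$-exceptional and $L$ is Cartier, the projection formula gives $f_* \Oo_{\tilde X}(mK_{\tilde X}+f^*L) = \Oo_X(mK+L)$, so $\tilde S$ pushes down to a section of $H^0(X, \Oo_X(mK+L))$ whose image under the restriction map of Lemma~\ref{le: meaning of extension} is $s$.

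\textbf{Main obstacle.} The hardest step will be the transfer of data in Step 1: one must show that $\Gg_m$ is compatible with the chosen resolution in the sense that the $L^2$-condition encoded by $\Gg_m(h|_{X_0})$ on the singular space $X_0$ becomes, after pullback along $f|_{\tilde X_0}$ and the correction by the effective exceptional divisor $E_0$, an analytic $L^2$-condition of the precise form needed to feed into Ohsawa--Takegoshi. A secondary difficulty is the control of the metrics $H_k$ near the exceptional locus of $f$, where $\tilde h$ may acquire additional singularities; these must be absorbed into the multiplier ideal on $\tilde X$ without destroying either the semi-positivity of the combined metric or the inductive estimates that drive the Siu--P\u{a}un argument.
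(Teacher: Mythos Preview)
Your overall strategy---pass to a smooth model, apply the Siu--P\u{a}un--Takayama extension machinery there, and descend via canonical singularities---is exactly the paper's approach. However, your execution inverts the order of operations at the crucial step, and this is a genuine gap rather than a detail.

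You propose to \emph{first} fix a log resolution $f\colon \tilde X \to X$ and \emph{then} argue that the $\Gg_m$-condition on $X_0$ transfers to an ordinary multiplier-ideal condition on $\tilde X_0$. But $\Gg_m(h|_{X_0})$ is defined existentially in the resolution (Definition~\ref{def: G_m}): a germ $\alpha$ lies in $\Gg_m(h|_{X_0})_x$ when \emph{some} resolution $\nu_0\colon W_0\to X_0$ makes $|\nu_0^*\alpha|^2|J(\nu_0)|^{2m}e^{-2\nu_0^*\varphi}$ locally integrable near $x$. There is no reason this should hold for a resolution chosen in advance, and in general it does not. The paper proceeds in the opposite order: given $s$, use compactness of $X_0$ to find a single global $\nu_0$ that works for $s$ (Lemma~\ref{lem: one resolution}), extend $\nu_0$ to a resolution $\mu\colon Y\to X$ whose strict transform $Y_0$ dominates $\tilde X_0$ via Hironaka's Chow lemma (Lemma~\ref{le: extend birational morphism}), and then observe that integrability is preserved under passage to higher resolutions (Lemma~\ref{le: comparing sections}). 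The resolution is dictated by $s$, not fixed beforehand; your ``main obstacle'' paragraph misdiagnoses this as showing compatibility with a chosen resolution.

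A second point you gloss over: on any such resolution $Y\to\De$, the scheme-theoretic central fiber is $Y_0+\Theta$ with $\Theta\geq 0$ exceptional, not $Y_0$ alone, so adjunction gives $K_Y|_{Y_0}\sim K_{Y_0}+\Theta_0$. The extension problem upstairs is therefore for sections of $mK_Y|_{Y_0}+\mu^*L|_{Y_0}$, and one must twist the metric to $\mu^*h\cdot\hbar_{m\Theta}$ (still semi-positive since $\Theta\geq 0$) and verify the corresponding integrability; this is handled in Lemmas~\ref{le: extension} and~\ref{le: extension for a fixed resolution}, together with a commutative-diagram check that the extended section really descends to a preimage of $s$. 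Your ``secondary difficulty'' gestures at the exceptional locus but misidentifies the issue as singularities of $\tilde h$; the real point is the shape of the central fiber and the resulting shift between $mK_{Y_0}$ and $mK_Y|_{Y_0}$.
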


The newly introduced ideal sheaf $\Gg_m(h)$ is of the birational (or bimeromorphic) nature (see Section  \ref{subsec: Multiplier ideal sheaves and sections}). Comparing with previous studies, this birational point of view gives new ingredients even in the smooth case (see Example \ref{eg: not the same as multiplier ideal sheaf}, Remark \ref{rmk: Paun's original thm is not enough}). For relevant notions of metrics on complex spaces which are adapted to this perspective, see Section \ref{sec: 3}.

Theorem \ref{thm: sing extension} involves with metrics and $L^2$-conditions. In order to use it in algebraic geometry, we need to obtain such analytic requirements by natural algebro-geometric conditions. The following statement is metric-free.

\begin{theorem}\label{thm: AG sing trivial boundary extension}
Let $\pi: X \to \De$ be a projective contraction from a  complex space $X$ to the disc $\De$. Let $X' \xrightarrow{f} X \xrightarrow{\pi} \De$ be a generalized polarized pair with the boundary part $B$ and the abundant nef part $M$. Let $X_0 \subset X$ be the fiber over $0\in\De$ and $(X_0, B_0+M_0)$ be the generalized polarized pair obtained by restricting to $X_0$. Assume that $(X_0, B_0+M_0)$ has g-canonical singularities (in particular, $B_0=0$). Then for each $m\in\Nn$ such that $mM$ is Cartier, any section of
\[
H^0(X_0, \Oo_{X_0}(m(K_{X_0}+ M|_{X_0})))
\] extends over $X$.
\end{theorem}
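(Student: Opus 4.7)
The plan is to deduce the statement from Theorem \ref{thm: sing extension} applied with $L := mM$, which is Cartier by hypothesis. Concretely, I would construct a singular Hermitian metric $h$ on $\Oo_X(L)$ with non-negative curvature current, well-defined restriction $h|_{X_0}$, and trivial multiplier ideal $\Gg_m(h|_{X_0})=\Oo_{X_0}$. Once such $h$ is in hand, every $s\in H^0(X_0,\Oo_{X_0}(m(K_{X_0}+M|_{X_0})))$ automatically lies in $H^0(X_0,\Oo_{X_0}(mK_{X_0}+L|_{X_0})\otimes\Gg_m(h|_{X_0}))$, and Theorem \ref{thm: sing extension} extends it to $X$.

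For the metric construction, I would exploit the abundance of $M$ through a Kawamata-type reduction: since $M$ is nef and abundant relative to $\De$, after passing to a sufficiently high birational model $p:\hat X\to X$ refining $f$, there exist a morphism $q:\hat X\to Z$ over $\De$ and a nef and big $\mathbb{Q}$-divisor $H$ on $Z$ with $p^*M\sim_{\mathbb{Q}} q^*H$. Applying Kodaira's lemma to $H$ yields a decomposition $H\sim_{\mathbb{Q}} A+E$ with $A$ relatively ample and $E\geq 0$; choosing this decomposition in general position so that no component of $q^*E$ contains the strict transform of $X_0$, one combines a smooth positively-curved metric on $A$ with $|s_E|^{-2}$ to obtain a singular metric on $\Oo_Z(H)$ with non-negative curvature. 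Pulling back along $q$, raising to the $m$-th power, and transferring from $\hat X$ to $X$ through the birational/bimeromorphic framework of Section \ref{sec: 3} yields the desired $h$; the general-position choice ensures $h|_{X_0}$ is well-defined in the sense of Theorem \ref{thm: sing extension}.

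The main difficulty, which I would address last, is showing that $\Gg_m(h|_{X_0})=\Oo_{X_0}$. The singularities of $h$ on $\hat X$ are encoded by the coefficients of $q^*E$, and a naive Kodaira decomposition could place too much weight along some divisor over $X_0$, making the multiplier ideal drop below $\Oo_{X_0}$. Here the \emph{birational} (b-divisor-flavored) definition of $\Gg_m$ from Section \ref{subsec: Multiplier ideal sheaves and sections} is tailor-made to compute the discrepancies attached to g-pairs, and the g-canonical hypothesis, i.e.\ non-negativity of all valuative discrepancies of $(X_0,0+M_0)$, should translate directly into $\Gg_m(h|_{X_0})=\Oo_{X_0}$ (cf.\ Example \ref{eg: not the same as multiplier ideal sheaf}). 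Making this translation rigorous—probably by further blowing up $\hat X$ to align $q^*E$ with the g-canonical model, or by replacing $h$ with a relative Bergman-kernel-type metric built from sections of $kmM$ in the spirit of Siu-P\u{a}un-Takayama so that only the numerical class of $p^*M$ controls the singularities—is the heart of the argument, and is the place where the bimeromorphic viewpoint on multiplier ideals introduced in this paper is indispensable.
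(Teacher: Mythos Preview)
Your overall plan---reduce to Theorem \ref{thm: sing extension} with $L=mM$, build a non-negative metric on $L$ from the abundant structure, and verify that every section satisfies the $\Gg_m$-integrability---matches the paper's. Two preliminary points you omit: one must first shrink $\De$ and invoke \cite{Kaw99} to ensure $X$ itself has canonical singularities (a hypothesis of Theorem \ref{thm: sing extension}); and your line ``$p^*M\sim_{\Qq} q^*H$'' is not quite right, since by Lemma \ref{le: negativity} one has $f^*M=M'+\Upxi$ with $\Upxi\geq 0$ exceptional, and it is $M'$ (not $f^*M$) that is $\sim_{\Qq} g^*H'$. This $\Upxi$ is not a nuisance to be blown away---it is the whole point of the computation.

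The genuine gap is the step you flag as ``the heart of the argument.'' Neither of your suggested fixes (further blow-ups, Bergman kernels) is what works; the paper's mechanism is more direct. First, a single Kodaira decomposition $H'\sim_{\Qq}A+E$ is replaced by a family $H'\sim_{\Qq}H_k+\tfrac{1}{k}B$ (Lemma \ref{le: a decomposition}), yielding metrics $h_k$ on $M$ whose singular part has weight $\tfrac{1}{k}\vphi_{F}$ plus the inevitable $\vphi_{\Upxi}$ (Lemma \ref{le: comparing h with h_k'}). Second, on $\ti X_0$ one writes $K_{\ti X_0}+D_0+M'_0=f_0^*(K_{X_0}+M_0)$ with $D_0\leq 0$ by g-canonicity, and $K_{\ti X_0}+B_0=f_0^*K_{X_0}$ with $B_0\leq 0$; subtracting gives the key identity $D_0=B_0+\Upxi_0$. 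Now the $\Gg_m$-integrand carries the factor $|J(f_0)|^{2m}=e^{-2m\vphi_{B_0}}$ (this is where the exponent $m$ in $\Gg_m$ is decisive), and the local weight becomes
\[
-m f_0^*\vphi_{k,0}-m\vphi_{B_0}\approx -\tfrac{m}{k}\vphi_{F_0}+\tfrac{m}{k}\vphi_{E_0}-m\vphi_{\Upxi_0}-m\vphi_{B_0}=-\tfrac{m}{k}\vphi_{F_0}+\tfrac{m}{k}\vphi_{E_0}-m\vphi_{D_0}.
\]
Since $E_0\geq 0$ and $-D_0\geq 0$, only the $\tfrac{m}{k}F_0$ term is dangerous, and for $k\gg 1$ its Lelong numbers are below $1$. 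Remark \ref{rmk: Paun's original thm is not enough} explains why the ordinary multiplier ideal $\Ii(h)$ fails here: with exponent $1$ instead of $m$ on $|J(f_0)|$, the $\Upxi_0$ term survives as $-(m-1)\vphi_{\Upxi_0}$ and destroys integrability for $m\gg 1$.
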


A direct consequence of Theorem \ref{thm: AG sing trivial boundary extension} is

\begin{corollary}\label{cor: sing invariant of plurigenera for g-pair}
Let $\pi: X \to \De$ be a projective contraction from a complex space $X$ to the disc $\De$. Let $X' \xrightarrow{f} X \xrightarrow{\pi} \De$ be a generalized polarized pair with the boundary part $B$ and the abundant nef part $M$. Let $X_t\subset X$ be the fiber over $t\in\De$ and $(X_t, B_t+M_t)$ be the generalized polarized pair obtained by restricting to $X_t$. Assume that for any $t\in \De$, $(X_t, B_t+M_t)$ has g-canonical singularities (in particular, $B_t=0$). Then for each $m\in\Nn$ such that $mM$ is Cartier, 
\[
h^0(X_t, \Oo_{X_t}(m(K_{X_t}+M|_{X_t})))
\] is independent of $t \in \De$.
\end{corollary}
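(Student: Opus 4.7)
The approach is to combine Theorem \ref{thm: AG sing trivial boundary extension} with upper semicontinuity of cohomology in proper families. Write $\mathcal{L} = \Oo_X(m(K_X + M))$; this is a reflexive coherent sheaf on $X$ (since $mM$ is Cartier and $K_X$ is $\mathbb{Q}$-Cartier with $X$ normal), hence torsion-free, and therefore flat over the smooth curve $\De$.

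Fix $t_0 \in \De$; after shrinking $\De$ to a disc centered at $t_0$ and reparametrizing, we may take $t_0 = 0$. The first step is to establish a lower bound on $h^0(X_t, \mathcal{L}|_{X_t})$ near $0$. Choose a basis $s_1, \dots, s_r$ of $H^0(X_0, \mathcal{L}|_{X_0})$. By Theorem \ref{thm: AG sing trivial boundary extension}, each $s_i$ lifts to some $\tilde s_i \in H^0(X, \mathcal{L})$; the resulting sheaf map $\Oo_\De^{\oplus r} \to \pi_*\mathcal{L}$ is injective after tensoring with $k(0)$. By Nakayama applied to the stalk over $\Oo_{\De, 0}$, it is injective on stalks at $0$, and, shrinking $\De$ if needed, injective on a neighborhood of $0$. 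Restricting the $\tilde s_i$ to $X_t$ then yields $r$ linearly independent sections of $\mathcal{L}|_{X_t}$ for $t$ close to $0$, giving
\[
h^0(X_t, \mathcal{L}|_{X_t}) \;\geq\; r \;=\; h^0(X_0, \mathcal{L}|_{X_0}).
\]

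For the reverse inequality, apply upper semicontinuity of cohomology to the proper map $\pi$ and the $\De$-flat sheaf $\mathcal{L}$, yielding $h^0(X_t, \mathcal{L}|_{X_t}) \leq h^0(X_0, \mathcal{L}|_{X_0})$ for $t$ close to $0$. Combining both bounds, $t \mapsto h^0(X_t, \mathcal{L}|_{X_t})$ is locally constant near every $t_0 \in \De$, and hence constant on the connected disc $\De$.

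The step expected to require the most care is the application of upper semicontinuity/base change, since $\mathcal{L}$ is only reflexive (as $mK_X$ need not be Cartier), which can complicate the standard flatness hypotheses in the usual statements. The fallback plan, if a direct appeal is not clean enough, is to use Theorem \ref{thm: AG sing trivial boundary extension} once more: the surjectivity it provides for the base-change map $(\pi_*\mathcal{L}) \otimes k(0) \to H^0(X_0, \mathcal{L}|_{X_0})$ already forces, via a Grauert-type cohomology-and-base-change argument, $\pi_*\mathcal{L}$ to be locally free around $0$ with base change an isomorphism on a neighborhood, which delivers constancy of $h^0$ in one stroke and sidesteps the reflexivity issue entirely.
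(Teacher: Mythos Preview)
Your overall strategy---Theorem \ref{thm: AG sing trivial boundary extension} for one inequality, semicontinuity for the other---matches the paper's. The gap is that you work throughout with $\mathcal L|_{X_t}=\mathcal L\otimes_{\Oo_X}\Oo_{X_t}$, whereas the corollary concerns $\Oo_{X_t}(m(K_{X_t}+M|_{X_t}))$. Since $mK_X$ need not be Cartier (see Remark \ref{rmk: K_X not Cartier}), $\mathcal L$ is only reflexive; its restriction $\mathcal L|_{X_t}$ is torsion-free but generally not reflexive, so the natural map $\mathcal L|_{X_t}\hookrightarrow \Oo_{X_t}(mK_{X_t}+mM|_{X_t})$ can fail to be an isomorphism, and there is no reason the $H^0$'s agree. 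Establishing that $h^0(X_t,\mathcal L|_{X_t})$ is locally constant therefore does not prove the corollary. Your fallback inherits the same problem: Theorem \ref{thm: AG sing trivial boundary extension} surjects onto $H^0(X_0,\Oo_{X_0}(mK_{X_0}+L_0))$, not onto $H^0(X_0,\mathcal L|_{X_0})$, and a Grauert argument only yields constancy of the latter.

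The paper closes this gap by bypassing $\mathcal L|_{X_t}$ and showing directly that
\[
\pi_*\Oo_X(mK_X+L)\otimes\Cc(t)\;\longrightarrow\;H^0(X_t,\Oo_{X_t}(mK_{X_t}+L_t))
\]
is an isomorphism for every $t$. Surjectivity is Theorem \ref{thm: AG sing trivial boundary extension}. Injectivity is obtained by writing the restriction exact sequence on $U=X\setminus\operatorname{Sing}X_t$, where near $U\cap X_t$ the space $X$ is smooth (as $X_t$ is Cartier) so all sheaves involved are locally free there, and then pushing forward by $j:U\hookrightarrow X$; since $X_t\setminus U$ has codimension $\geq 2$ in $X_t$, the pushforward of $\Oo_{U\cap X_t}(mK_{X_t}+L_t)$ recovers the reflexive sheaf $\Oo_{X_t}(mK_{X_t}+L_t)$ on the nose. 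Upper semicontinuity is then applied to $\dim_{\Cc}\bigl(\pi_*\Oo_X(mK_X+L)\otimes\Cc(t)\bigr)$, which holds automatically for any coherent sheaf on $\De$, with no flatness hypothesis on $\mathcal L$ needed.
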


Both the abundant assumption and the singularity assumption are indispensable. In fact, Corollary \ref{cor: sing invariant of plurigenera for g-pair} is false if $M$ is just assumed to be nef (see Example \ref{eg: M nef}). An example of Kawamata shows that the extension of local sections fails if the singularities are klt (see \cite[Example 4.3]{Kaw99b}).

It turns out that even in the smooth case, P\u aun's twisted version of invariance of plurigenera (see \cite[Theorem 1]{Pau07}) is not enough to obtain Theorem \ref{thm: AG sing trivial boundary extension} (see Remark \ref{rmk: Paun's original thm is not enough}). Instead, we need Theorem \ref{thm: sing extension} which allows potentially more sections to be extended.

Besides, \cite[Theorem 4.1]{FS20} establishes a version of invariance of plurigenera assuming that $K_X+B+M$ or $B+M$ is big over $\De$ and $M$ is nef over $\De$. The argument of \cite[Theorem 4.1]{FS20} follows from \cite[Theorem 4.2]{HMX13} which relies on the minimal model program for varieties of general type.

We discuss the structure of the paper. Section \ref{sec: 2} gives the background material on generalized polarized pairs for complex spaces. Section \ref{sec: 3} discusses metrics of $\Qq$-Cartier divisors on complex spaces. Section \ref{sec: 4} gives the construction of metrics for the abundant nef parts of generalized polarized pairs. Section \ref{sec: 5} introduces a new type multiplier ideal sheaf and proves the aforementioned theorems and corollary. 

\medskip

\noindent\textbf{Acknowledgements}.
We thank Hanlong Fang, Sheng Rao and Lei Zhang for many discussions and for answering questions. Z. L. is partially supported by a grant from SUSTech. Z. W. is partially supported by  National Key R\&D Program of China (No.2021YFA1002600) and  NSFC grant (No.12071035).

\section{Generalized polarized pairs}\label{sec: 2}

\subsection{Notation and conventions}
Let $\Zz$ be the set of integers and $\Nn= \Zz_{>0}$ be the set of natural numbers.

A holomorphic map $f: X \to Y$ between complex spaces is called a morphism, and a morphism is called a contraction if it is surjective with connected fibers. The morphism $f$ is called a proper modification if (1) $f$ is proper and surjective, and (2) there exists a nowhere dense analytic subset $Z\subset Y$ such that $f|_{X- f^{-1}(Z)}: X- f^{-1}(Z) \to Y- Z$ is an isomorphism. A bimeromorphic map $f: X\dto Y$ between complex spaces is a meromorphic map such that the graph $\Gamma_f$ is an irreducible analytic subset in $X \times Y$, and the natural maps $\Gamma_f \to X, \Gamma_f \to Y$ are proper modifications (see \cite[Definitions 2.1, 2.2, 2.7]{Uen75}). A proper modification $\mu: W \to X$ is called a resolution if $W$ is smooth. A proper morphism $f: X \to Y$ between complex spaces is called projective if for any relatively compact open set $U \subset Y$, there is an embedding $j: f^{-1}(U) \hookrightarrow \Pp_U^n \coloneqq \Pp_{\Cc}^n \times U$ such that $f|_{f^{-1}(U)}=p_2\circ j$, where $p_2: \Pp_U^n \to U$ is the natural projection map (see \cite[Chapter V]{Pet94}).

A divisor means a Weil divisor. A finite $\Qq$-linear combination of divisors gives $\Qq$-divisors. A $\Qq$-divisor is called $\Qq$-Cartier if it is a finite $\Qq$-linear combination of Cartier divisors.

Let $D$ be a $\Qq$-divisor on $X$. Then $(X, D)$ is called a log pair. A resolution $\mu: W \to X$ is called a log resolution of $(X, D)$ if $W$ is smooth and $\Supp \mu_*^{-1} D \cup \Supp \Exc(\mu)$ is a simple normal crossing divisor, where $\mu_*^{-1} D$ is the strict transform of $D$ and $\Exc(\mu)$ is the exceptional locus of $\mu$. When $X$ is a reduced complex space, $(X, D)$ always admits a projective log resolution (see \cite[Theorem 2.0.2]{Wlo09}).

Let $X$ be a normal complex space. Let $X \to S$ be a morphism to a complex space $S$. We write the relative property over $S$ by $/S$. Let $k = \Zz, \Qq$. For two $k$-divisors $B$ and $D$, we use $B \sim_{k } D/S$ to denote that $B$ and $D$ are $k$-linearly equivalent over $S$.

Let $D$ be a Weil divisor on a normal complex space $X$, then $\Oo_X(D)$ denotes the sheaf associated with $D$. To be precise, let $\mathscr{M}_X$ be the sheaf of germs of meromorphic functions on $X$ (see \cite[Chapter II, \S 6.2]{Dem12}), then $\Oo_X(D)\subset \mathscr{M}_X$ is the sheaf such that for any open set $U \subset X$,
\[
\Oo_X(D)(U) = \{f\in \mathscr{M}_X(U) \mid  \di(f)+D|_{U} \geq 0\}.
\] The sheaf $\Oo_X(D)$ is coherent and the subscript of $\Oo_X(D)$ will be omitted if it is clear from the context.

Recall that for an $n$-dimensional normal complex space $X$, the canonical divisor $K_X$ is any Weil divisor such that $j_*\Omega^n_{X_{\rm reg}} \simeq \Oo_X(K_X)$ for the open embedding $j: X_{\rm reg} \hookrightarrow X$, where $X_{\rm reg}$ is the smooth locus of $X$ and $\Omega^n_{X_{\rm reg}}$ is the sheaf of holomorphic $n$-forms on $X_{\rm reg}$.

Let $f: X \to Y$ be a morphism and $B$ be a $\Qq$-Cartier divisor on $Y$. We write $f^*B$ for the $\Qq$-Cartier divisor which is the pull-back of $B$. If $D$ is a Cartier divisor on $X$, then we write $\Oo_X(D)$ for the corresponding line bundle, and $f_*\Oo_X(D)$ for the push-forward of the sheaf $\Oo_X(D)$. 

Let $f: X \to Y$ be a proper modification between normal complex spaces and $D$ be a prime divisor on $X$. We define the push-forward divisor 
\[
f_*D \coloneqq\begin{cases}
f(D), & \text{ if~ } D \text{~is not~} f\text{-exceptional},\\
0, & \text{ if~ } D \text{~is~} f\text{-exceptional}.
\end{cases}
\] This construction can be extended to $\Qq$-divisors by linearity. Moreover,  for $k = \Zz, \Qq$, if $B \sim_{k } D/S$, then $f_*B \sim_{k } f_*D/S$ because the push-forward of a principal divisor is still a principal divisor. 

Finally, for a projective morphism $X \to S$, a $\Qq$-Cartier divisor $D$ on $X$ is nef$/S$ if there is an ample$/S$ divisor $H$ on $X$ such that $D+\ep H$ is ample$/S$ for any $\ep \in \Qq_{>0}$, and $D$ is big$/S$ if $D|_{X_t}$ is big for a general $t\in S$.
 
\subsection{Generalized polarized pairs with abundant nef part}

Generalized polarized pairs originate from the canonical bundle formula (see \cite{Kaw98}). It was observed by \cite{BZ16} that the generalized polarized pair structure deserves to be studied separately and it behaves like the usual log pair structure in many ways. We state the definition of generalized polarized pairs for complex spaces (see {\cite[Definition 1.4]{BZ16} for the original definition in the algebraic category).

\begin{definition}\label{def: gpair}
Let $X', X$ and $S$ be normal complex spaces. A generalized polarized pair (g-pair) consists of  projective morphisms $X' \xrightarrow{f} X \to S$ where $f$ is a modification, a $\Qq$-divisor $B \geq 0$ on $X$, and a $\Qq$-Cartier divisor $M'$ on $X'$ which is nef$/S$ such that $K_{X} + {B} + {M}$ is $\Qq$-Cartier, where $M \coloneqq f_*M'$. We call $B$ the boundary part and $M$ the nef part. 
\end{definition}

\begin{remark}\label{rmk: replacing by higher model}
In the definition of g-pairs, we can replace $X'$ by any projective modification of $X'$ and $M'$ by its pull-back. Therefore, we can assume that $f$ is a projective log resolution of $(X, B)$. 
\end{remark}

\begin{definition}\label{def: abundant gpair}
Under the notation of Definition \ref{def: gpair}, if there exists a proper contraction $g: X' \to Z/S$ such that  $M' \sim_\Qq g^*H/S$ for some nef and big$/S$ $\Qq$-Cartier divisor $H$ on $Z$, then we call that such g-pair has the abundant nef part.
\end{definition}

Replacing $X'$ by a projective modification and $M'$ by its pull-back (see Remark \ref{rmk: replacing by higher model}), the new g-pair still has the abundant nef part.

\begin{remark}
A g-pair with the abundant nef part naturally appears in the canonical bundle formula where the concept of g-pairs originates (see \cite{Amb05}). The word ``abundant" comes from the fact that divisors satisfying the property in Definition \ref{def: abundant gpair} is related to the abundance conjecture (see \cite[Proposition 2.1]{Kaw85}).
\end{remark}

\subsection{Singularities and adjunctions for g-pairs}

\begin{definition}\label{def: Q-Gorenstein}
A normal complex space is called $\Qq$-Gorenstein if $K_X$ is a $\Qq$-Cartier divisor. 
\end{definition}

\begin{definition}\label{def: canonical sing}
A normal complex space $X$ has canonical singularities if (1) $X$ is $\Qq$-Gorenstein, and (2) for any resolution $f: W \to X$, if $K_W=f^*K_X+E$ with $E$ an $f$-exceptional divisor, then $E \geq 0$.
\end{definition}

\begin{definition}\label{def: discrepancies}
Under the notation of Definition \ref{def: gpair}, for a prime divisor $P$ over $X$, its discrepancy with respect to (X, B+M) is defined to be
\[
{\rm discp}(P;X,B+M) \coloneqq \mult_P\left(K_Y+g^*M' - (f\circ g)^*(K_X+B+M)\right),
\] where $g: Y \to X'$ is a proper modification such that $P \subset Y$.
\end{definition}

\begin{definition}\label{def: singularities}
A g-pair $(X, B+M)$ has g-canonical (resp. g-terminal, g-klt, g-lc) singularities if ${\rm discp}(P;X,B+M) \geq 0$ (resp. $>0$, $>-1$, $\geq -1$) for each prime divisor $P$ over $X$.
\end{definition}

The adjunction formula for g-pairs is given by \cite[Definition 4.7]{BZ16} in the algebraic setting. It can be naturally extended to the analytic setting. 

\begin{definition}[Adjunction formula for g-pairs]\label{def: g-adjunction}
	Let $(X,B+M)$ be a g-pair with data $ X' \xrightarrow{f} X \to S$ and $M'$. Let $P$ be a normal irreducible component of $\lfloor B \rfloor$ and $P'$ be its strict transform on $X'$. We may assume that $f$ is a projective log resolution of $(X,B+M)$. Write 
	\[
	K_{X'} +B'+M'=f^*(K_{X} +B +M),
	\] then
	\[
	K_{P'} +B_{P'} +M_{P'} \coloneqq (K_{X'} +B'+M')|_{P'},
	\]
	where $B_{P'} = (B'-P')|_{P'}$ and $M_{P'} =M'|_{P'}$. Let $g$ be the induced morphism $P'\to P$. Set $B_{P} = g_*B_{P'}$ and $M_{P} =g_*M_{P'}$. Then we get the equality
	\[
	K_{P}+B_{P}+M_{P} = (K_{X}+B+M)|_{P},
	\] which is referred as (generalized) adjunction formula.
\end{definition}

In general, $M|_{P} \neq M_P$, and even if $(X,B+M)$ has the abundant nef part, $(P, B_P+M_P)$ may not have the abundant nef part. On the other hand, in the category of algebraic varieties, if $(X, B+M)$ is g-lc, then $(P, B_P+M_P)$ is still g-lc with data $P' \xrightarrow{f} P \to S$ and $M_{P'}$ by \cite[Remark 4.8]{BZ16}.

Let $X \to \De$ be a projective contraction to the disc $\De$. Let $X_0$ be the fiber over $0\in\De$. Suppose that $X_0$ is normal, then we define
\begin{equation}\label{eq: adjunction on X_0}
(X_0, B_0+M_0)
\end{equation} to be the g-pair by the adjunction of $(X, X_0+B+M)$ on $X_0$. In the sequel, $(X_0, B_0+M_0)$ will be assumed to have g-canonical singularities. Under this assumption, $B_0 =0$ and $M_0 = M|_{X_0}$.

\section{Metrics on complex spaces}\label{sec: 3}

\subsection{Metrics and $\Qq$-metrics}
Let $X$ be a normal complex space, $\Ll$ be a holomorphic line bundle on $X$. We define the notion of singular metrics in this setting.

Let $x\in X$ be a point and $\Ll_x$ be the stalk at $x$. Assume that there is a function
\[
\|-\|_h: \Ll_x \to \Rr_{\geq 0} \cup \{+\infty\},
\] such that if $\theta: \Ll|_U \simeq \Oo_U$ is a trivialization on an open set $U \subset X$, then for any $s \in \Ll_x$, we have
\[
\|s\|_h = |\theta(s)|e^{-\vphi}
\] for a function $\vphi: U \to \Rr \cup \{\pm \infty\}$. This $\vphi$ is called a local weight and it depends on $\theta$. Let $\mu: W \to X$ be a resolution. Then $\mu^*\theta: \mu^*\Ll|_{\mu^{-1}(U)} \simeq \Oo_{\mu^{-1}(U)}$ is a trivialization for $\mu^*\Ll$. For any $\ti s \in (\mu^*\Ll)_w$ with $w \in \mu^{-1}(U)$, set
\begin{equation}\label{eq: pullback of metric}
\|\ti s\|_{\mu^*h} \coloneqq |\mu^*\theta(\ti s)|e^{-\mu^*\vphi},
\end{equation} where $\mu^*\vphi(w) = \vphi(\mu(w))$. Note that $\|-\|_{\mu^*h}$ is independent of the choice of trivializations. In fact, for local trivializations $\theta_1, \theta_2$, there is a nowhere vanishing holomorphic function $u$ such that $\theta_1=u\theta_2$. Thus $|u|e^{-\vphi_1} = e^{-\vphi_2}$ and 
\begin{equation}\label{eq: pullback relation}
|\mu^*u|e^{-\mu^*\vphi_1} = e^{-\mu^*\vphi_2}.
\end{equation}

Let $L^1_{\rm loc}(V)$ be the set of locally integrable functions on a smooth open set $V$.

\begin{definition}[Singular metrics on normal complex spaces]\label{def: singular metric on varieties}
Under the above notation, $h$ is a (singular) metric of $\Ll$ if for any resolution $\mu: W \to X$, we have $\mu^*\vphi \in L_{\rm loc}^1(\mu^{-1}(U))$ for any open set $U \subset X$.
\end{definition}

The property of $\vphi$ in Definition \ref{def: singular metric on varieties} will be referred as the $L^1_{\rm loc}$-property. By abuse of terminology, if $D$ is a Cartier divisor, then we also call a metric of the corresponding line bundle $\Oo(D)$ a metric of $D$. This is particularly convenient to treat $\Qq$-Cartier divisors.

\begin{definition}[{\cite[Definition 1.4]{Pet94}}]\label{def: psh on complex space}
Let $X$ be a complex space. A plurisubharmonic (psh for short) function on $X$ is a function $\varphi:X\rightarrow [-\infty,\infty)$ having the following property. For every $x\in X$ there is an open neighborhood $U$ with a biholomorphic map $h:U\rightarrow V$ onto a closed complex subspace $V$ of some domain $G\subset \mathbb C^m$ and a  plurisubharmonic function $\tilde \varphi:G\rightarrow [-\infty,\infty)$ such that $\varphi|_U=\tilde\varphi\circ h$.
\end{definition}

If $f:X\rightarrow Y$ is a holomorphic map between two complex spaces, and $\varphi:Y\rightarrow [-\infty,\infty)$ is a psh function on $Y$, then $\varphi\circ f$ is a psh function on $X$ (see \cite[Page 356]{Nar61}).

 \begin{definition}\label{def: non-negative curvature on variety}
Under the notation of Definition \ref{def: singular metric on varieties}, $(\Ll, h)$ is said to have non-negative curvature current if any local weight $\varphi$  on an open subset of $X$ is a psh function.
 \end{definition}

 \begin{remark}\label{rmk: change of variable}
 	In Definition \ref{def: singular metric on varieties}, the local $L_{\rm loc}^1$-property is needed to hold for all resolutions. If the $L_{\rm loc}^1$-property holds for one resolution $\mu: W \to X$, then it is not true that this property holds for another resolution $\nu: V \to X$ even if $\nu$ factors through $\mu$. In fact, by the change-of-variables formula, 
 	\[
 	\int_W |\ti\vphi| ~dV_W = \int_V |\nu^*\ti\vphi| \cdot |s|^2~dV_V,
 	\] where $s$ is the local equation for the effective exceptional divisor $K_V-f^*K_W$ (hence $s$ is holomorphic). Hence $\ti\vphi \in L_{\rm loc}^1$ does not imply $\nu^*\ti\vphi \in L_{\rm loc}^1$.
 	
 	However, if the curvature current is non-negative (see Definition \ref{def: non-negative curvature on variety}), then it is enough to just consider one resolution (see Proposition \ref{prop: independent}).
 \end{remark}

The notion of psh functions is of bimeromorphic nature:
 
 \begin{proposition}\label{prop: independent}
 Let $\mu: W \to X$ be a resolution of a normal complex space $X$. Then a function $\vphi: X \to [-\infty, \infty]$ is a psh function iff $\vphi\circ\mu: Y \to [-\infty, \infty]$ is a psh function.
\end{proposition}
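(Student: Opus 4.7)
The forward direction is immediate: if $\varphi$ is psh on $X$, then $\varphi \circ \mu$ is psh on $W$ by the pullback invariance of psh functions under holomorphic maps, cited right after Definition \ref{def: psh on complex space}.

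For the converse, set $\tilde\varphi \coloneqq \varphi \circ \mu$ and assume it is psh on $W$. My plan is first to establish that $\varphi$ is upper semicontinuous and locally bounded above on $X$, then to identify $\varphi|_{X_{\rm reg}}$ with a Riemann-type extension of the pushed-down function (and hence psh), and finally to cross $X_{\rm sing}$ using normality. Upper semicontinuity follows from the identity $\{\varphi \geq c\} = \mu(\{\tilde\varphi \geq c\})$ (valid because $\tilde\varphi$ is constant on fibers of $\mu$ by its very definition) together with closedness of the proper map $\mu$; local boundedness above follows from compactness of $\mu^{-1}(x)$ and local boundedness above of the psh function $\tilde\varphi$.

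By the definition of a proper modification, there is a nowhere dense analytic subset $Z \subset X$ such that $\mu$ is a biholomorphism over $X \setminus Z$, so $\varphi$ is psh on $X_{\rm reg} \setminus Z$. The Riemann extension theorem for psh functions on complex manifolds then yields a psh function $\Phi$ on $X_{\rm reg}$, namely the usc envelope, that extends $\varphi|_{X_{\rm reg} \setminus Z}$. The key step is $\Phi = \varphi|_{X_{\rm reg}}$: the inequality $\Phi \leq \varphi$ is immediate from upper semicontinuity of $\varphi$, while $\varphi \leq \Phi$ uses the sub-mean-value property of $\tilde\varphi$ on a small ball around any point $w \in \mu^{-1}(Z)$ in $W$, combined with the fact that $\mu^{-1}(Z)$ is a proper analytic (hence Lebesgue-null) subset of $W$; this forces a positive-measure part of each such ball to land in $X_{\rm reg} \setminus Z$ with $\varphi$-values within $\varepsilon$ of $\varphi(\mu(w))$ for every $\varepsilon > 0$. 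Finally, normality of $X$ makes $X_{\rm sing}$ a nowhere dense analytic subset, and the analogous Riemann extension theorem for psh functions on normal complex spaces extends $\varphi|_{X_{\rm reg}}$ to a psh function on $X$; the same sub-mean-value argument, now applied to $\mu^{-1}(X_{\rm sing})$, identifies this extension with $\varphi$ itself.

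The main obstacle is the identification step: a priori the Riemann extension is only $\leq \varphi$ by upper semicontinuity, and one must rule out any ``positive spike'' of $\varphi$ at a point of $Z$ or of $X_{\rm sing}$. Such a spike is compatible with upper semicontinuity alone, but is precisely what the sub-mean-value property of the psh function $\tilde\varphi$ on $W$ forbids once combined with the fact that the preimages of the exceptional loci are proper analytic subsets of $W$; this is the decisive and essentially only use of the hypothesis that $\tilde\varphi$ is psh rather than merely upper semicontinuous.
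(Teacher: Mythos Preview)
Your proof is correct, but the paper's argument is considerably shorter and organized differently. The paper exploits normality once, at the outset: since $X$ is normal, $Z \coloneqq \mu(\Exc(\mu))$ has codimension $\geq 2$ in $X$, so the Grauert--Remmert extension theorem \cite[Satz 4]{GR56} immediately produces a psh extension $\hat\varphi$ of $\varphi|_{X\setminus Z}$ to all of $X$ in a single step. The identification $\hat\varphi = \varphi$ is then dispatched in one line: both $\hat\varphi\circ\mu$ and $\varphi\circ\mu$ are psh on the manifold $W$ and coincide off the proper analytic set $\Exc(\mu)$, hence are equal everywhere (a psh function is determined by its $L^1_{\rm loc}$ class). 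Surjectivity of $\mu$ gives $\hat\varphi = \varphi$.

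By contrast, you split the extension into two stages (first across $Z$ inside the manifold $X_{\rm reg}$, then across $X_{\rm sing}$), and you prepare for this by establishing upper semicontinuity and local boundedness of $\varphi$ directly. Your sub-mean-value identification is exactly the content of the paper's ``two psh functions agreeing off a null set are equal,'' unpacked by hand. What your route buys is that the first stage lives entirely on a manifold, so only the classical Riemann extension for locally bounded psh functions is needed there; what it costs is the extra bookkeeping and a second invocation of an extension theorem on the (possibly singular) space $X$ to cross $X_{\rm sing}$---which in the end is the same \cite{GR56} result the paper uses anyway. One small imprecision: you write that normality makes $X_{\rm sing}$ ``nowhere dense,'' but the relevant consequence (and the hypothesis the extension theorem on normal spaces actually needs) is that $X_{\rm sing}$ has codimension $\geq 2$.
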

\begin{proof}
One only needs to show the sufficient part. As $\mu$ is a proper modification, $Z \coloneqq \mu(\Exc(\mu))$ is an analytic set of codimension $\geq 2$. Then $\vphi$ is a psh function on $X-Z$. By \cite[Theorem 1.5]{Pet94}, the definition of psh functions in  \cite{GR56} is the same as Definition \ref{def: psh on complex space}. By \cite[Page 181, Satz 4]{GR56}, $\vphi$ extends to a psh function $\hat\vphi$ on $X$. As $\hat\vphi\circ\mu$ is a psh function on $W$ which coincides with $\vphi\circ\mu$ on $W-\Exc(\mu)$, we have $\hat\vphi\circ\mu=\vphi\circ\mu$ on $W$. Hence $\vphi=\hat\vphi$ is a psh function.
\end{proof}

For simplicity, we can formally extend the notion of metrics for $\Qq$-Cartier divisors (or $\Qq$-line bundles). 

\begin{definition}\label{def: metric for Q-div}
Let $D$ be a $\Qq$-Cartier divisor on a complex space $X$. Then a $\Qq$-metric $h$ for $D$ is a triple $(m, D, h_{mD})$ such that $m \in \Nn$ with $mD$ a Cartier divisor and $h_{mD}$ is a metric for $mD$. We say that two $\Qq$-metrics $(m, D, h_{mD})$ and $(n, D, h_{nD})$ are equal if $h_{mD}^n= h_{nD}^m$ as metrics for the Cartier divisor $mnD$. For simplicity, $h$ is also called a metric.
\end{definition}

\begin{remark}\label{rmk: uniqueness}
Given a metric $(m, D, h_{mD})$, for any $n \in \Nn$ such that $nD$ is Cartier, there always exists a metric $(n, D, h_{nD})$ which is equal to $(m, D, h_{mD})$. In fact, we can set $h_{nD} = h_{mD}^{\frac n m}$.
\end{remark}

If $D =\sum D_i$ is a finite sum of $\Qq$-Cartier divisors such that each $D_i$ admits a metric $h_i$, then we define a metric $h\coloneqq \prod_i h_{i}$ for $D$ as follows. Take $m\in\Nn$ such that each $mD_i$ is Cartier, then 
\[
h=(m, D, h_{mD})
\] with $h_{mD} = \prod_i h_{mD_i}$ where $h_i=(m, D_i, h_{mD_i})$ (see Remark \ref{rmk: uniqueness}). It is straightforward to check that this definition is independent of the choice of $m$. Let $D$ be a $\Qq$-Cartier divisor with a metric $h$, then for $r\in \Qq$, we can similarly define a metric $h^r$ for $rD$. 

Let $f: W \to X$ be a proper modification between normal complex spaces. Let $D$ be a $\Qq$-Cartier divisor on $X$ with a metric $h$. We can define the pull-back metric $f^*h$ for the $\Qq$-Cartier divisor $f^*D$. In fact, it is enough to assume that $D$ is a Cartier divisor by the definition of metrics for $\Qq$-Cartier divisors. If $\theta: \Oo(D)|_U \simeq \Oo_U$ is a trivialization, then $f^*\theta: \Oo(f^*D)|_{f^{-1}(U)} \simeq \Oo_{f^{-1}(U)}$. If $\|s\|_{h}=|\theta(s)|e^{-\vphi}$ for $x\in U$ and $s\in \Oo(D)_x$, then
\[
\|\ti s\|_{f^*h} \coloneqq |f^*\theta(\ti s)| e^{-f^*\vphi}
\] for $w\in f^{-1}(U)$ and $\ti s\in \Oo(f^*D)_w$. This definition is independent of the choice of $\theta$.

\begin{remark}\label{rmk: in terms of weight}
It is more convenient to think of a metric in terms of a local weight: if $\vphi$ is the local weight for $h_{mD}$ (under a given trivialization), then $\frac \vphi m$ is defined to be the local weight of $(m, D, h_{mD})$. The summation  (resp. rational multiple,  pull-back) of local weights corresponds to the product (resp. rational exponent, pull-back) of metrics. 
\end{remark}

 \begin{remark}\label{rmk: metric for linear equiv}
Suppose that $B, D$ are $\Qq$-Cartier divisors such that $B \sim_\Qq D$, then a metric of $B$ is also a metric of $D$. In fact, local weights are identified under an isomorphism $\Oo(mB) \simeq \Oo(mD)$ for some $m\in \Nn$.
\end{remark}

On a complex manifold, a function $\vphi$ is called quasi-plurisubharmonic (quasi-psh) if it is a summation of a psh function with a smooth function.

\begin{definition}[{\cite[Definition 1.4]{DPS00}}]\label{def: compare sing}
For two quasi-psh functions $\vphi_1, \vphi_2$, we say that $\vphi_1$ is less singular than $\vphi_2$ (and write $\vphi_1 \preceq \vphi_2$) if $\vphi_2 \leq \vphi_1+c$ for a constant $c$. We write $\vphi_1 \approx \vphi_2$ if $\vphi_1 \preceq \vphi_2$ and $\vphi_2 \preceq \vphi_1$.

For two functions $\vphi_i: X \to \Rr \cup \{-\infty\}, i=1,2$ on a complex space $X$, we use the same notation as above if the corresponding relations hold after pulling back $\vphi_i$ to all the resolutions.
\end{definition}

\subsection{Metrics defined by global sections}\label{subsection: metrics defined by global sections}

Let $X$ be a normal complex space and $L$ be a Cartier divisor on $X$. Suppose that $\sigma_i \in H^0(X, \Oo(L)), 1 \leq i \leq k$ are global sections (may not necessarily be distinct). They can be used to define a  metric $h$ for $L$ just as in the smooth case.

Let $U \subset X$ be an open set such that $\theta: \Oo(L)|_U \simeq \Oo_U$ is a trivialization. Then for any $s\in L_x$, set
\begin{equation}\label{eq: definition of metric}
\|s\|^2_h = \frac{|\theta(s)|^2}{\sum_{1 \leq i \leq k} |\theta(\sigma_i)|^2}=|\theta(s)|^2 e^{-2\vphi}.
\end{equation} The local weight of $h$ (under the trivialization $\theta$) is
\[
\vphi = \frac 1 2 \log (\sum_{1 \leq i \leq k} |\theta(\sigma_i)|^2).
\] It satisfies the $L_{\rm loc}^1$-property for any resolution $\mu: W \to X$ because 
\[
\mu^*\vphi =\frac 1 2 \log (\sum_{1 \leq i \leq k} |\mu^*\theta(\mu^*\sigma_i)|^2)
\] is a psh function and thus Proposition \ref{prop: independent} applies to this situation.

More generally, for a $\Qq$-Cartier divisor $D$ on  $X$, suppose that $mD$ is Cartier and $\sigma_i \in H^0(X, \Oo_{X}(mD)), 1\leq i\leq k$ are global sections. If $\theta: \Oo(mD)|_U \simeq \Oo_U$ is a trivialization, then set
\[
\vphi = \frac{1}{2m}\log (\sum_{1\leq i\leq k} |\theta(\sigma_i)|^{2})
\] as the local weight of $h$ (see Remark \ref{rmk: in terms of weight}).

We call the metrics defined above the metrics defined by global sections. 

For an effective Cartier divisor $D$, choose $\sigma=1 \in H^0(X, \Oo_{X}(D))$, then the metric as \eqref{eq: definition of metric} is denoted by $\hbar_D$.

In general, let $D$ be a $\Qq$-Cartier divisor. If $D = \sum_{1 \leq i \leq n} r_i D_i-\sum_{1 \leq j\leq m}s_j B_j$ is a decomposition such that $r_i,s_j \in \Qq_{>0}$ and $D_i, B_j$ are effective Cartier divisors, then set
\begin{equation}\label{eq: for non-effective div}
\hbar_D \coloneqq  \left( \prod_{1 \leq i \leq n} \hbar_{D_i}^{r_i}\right)\cdot  \left( \prod_{1 \leq j \leq m} (\hbar^{-1}_{B_j})^{s_j}\right),
\end{equation} 
where $\hbar^{-1}_{B_j}$ is the dual metric for $\Oo(-B_j)$. Note that $\hbar_D$ is a metric as the corresponding weight still satisfies the $L^1_{\rm loc}$-property. 

The above $\hbar_D$ is independent of the choice of decompositions of $D$. Indeed, suppose that $mD$ is Cartier for some $m \in \Nn$ and $\theta: \Oo(mD)|_U \simeq \Oo_U$ is a trivialization. Then for any $x\in U$ and $s\in \Oo(mD)_x$, we always have $\|s\|^2_{\hbar_{D}^m}=|\theta(s)|^2/|\theta(1)|^2$.

Let $Z \subset Y$ be a normal complex subspace such that $Z \not\subset \Supp D$, then $D|_Z$ is still a $\Qq$-Cartier divisor. Let $\hbar_D|_Z$ be the pull-back of $\hbar_D$ to $Z$, then
\begin{equation}\label{eq:restriction}
\hbar_D|_Z = \hbar_{D|_Z}.
\end{equation}

\begin{proposition}\label{prop: product of metric defined by global sections}
Suppose that $h_B, h_D$ are metrics defined by global sections for $\Qq$-Cartier divisors $B, D$ respectively. Then $h_Bh_D$ is a metric for $B+D$ which is also defined by global sections. 
\end{proposition}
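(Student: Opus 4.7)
The plan is to put $h_B$ and $h_D$ on a common denominator $N$ with both $NB$ and $ND$ Cartier, and then realize $h_Bh_D$ as the metric defined by the pairwise products of the chosen sections.

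First I would apply Remark \ref{rmk: uniqueness} to pass to such a common $N$. Concretely, suppose $h_B$ is originally given by sections $\sigma_1,\dots,\sigma_k\in H^0(X,\Oo(mB))$, and set $n=N/m$. I would replace $\{\sigma_i\}$ by the collection of $n$-fold products $\sigma_{i_1}\cdots\sigma_{i_n}\in H^0(X,\Oo(mnB))=H^0(X,\Oo(NB))$, indexed by all tuples $(i_1,\dots,i_n)\in\{1,\dots,k\}^n$. Using the identity
\[
\sum_{i_1,\dots,i_n}|\theta(\sigma_{i_1})|^2\cdots|\theta(\sigma_{i_n})|^2=\Bigl(\sum_i|\theta(\sigma_i)|^2\Bigr)^n,
\]
a direct logarithm computation shows that the resulting local weight $\tfrac{1}{2N}\log(\cdots)$ equals the original weight $\tfrac{1}{2m}\log(\sum_i|\theta(\sigma_i)|^2)$, so this re-description defines the same $\Qq$-metric in the sense of Definition \ref{def: metric for Q-div}. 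Applying this to both factors, I may assume $h_B$ is defined by sections $\alpha_i\in H^0(X,\Oo(NB))$ and $h_D$ by sections $\beta_j\in H^0(X,\Oo(ND))$ for one common $N$.

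Next I would consider the global sections $\alpha_i\beta_j\in H^0(X,\Oo(N(B+D)))$ obtained via the natural map $\Oo(NB)\otimes\Oo(ND)\to\Oo(N(B+D))$. On any open set $U$ over which $\theta_B$ and $\theta_D$ trivialize $\Oo(NB)$ and $\Oo(ND)$, the product $\theta_B\theta_D$ trivializes $\Oo(N(B+D))|_U$, and $|\theta_B\theta_D(\alpha_i\beta_j)|^2=|\theta_B(\alpha_i)|^2|\theta_D(\beta_j)|^2$. Hence the local weight of the metric defined by $\{\alpha_i\beta_j\}$ factors as
\[
\frac{1}{2N}\log\Bigl(\sum_{i,j}|\theta_B(\alpha_i)|^2|\theta_D(\beta_j)|^2\Bigr)=\frac{1}{2N}\log\Bigl(\sum_i|\theta_B(\alpha_i)|^2\Bigr)+\frac{1}{2N}\log\Bigl(\sum_j|\theta_D(\beta_j)|^2\Bigr),
\]
which by Remark \ref{rmk: in terms of weight} is exactly the local weight of $h_Bh_D$. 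This identifies $h_Bh_D$ with the metric for $B+D$ defined by the global sections $\{\alpha_i\beta_j\}$, as desired. The only step requiring care is the bookkeeping of denominators in the first reduction; once that is in place the statement reduces to the elementary factorization of a sum of products into a product of sums.
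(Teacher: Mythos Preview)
Your proof is correct and takes essentially the same approach as the paper: both rely on the identity $(\sum_i|a_i|^2)^n=\sum_{i_1,\dots,i_n}|a_{i_1}\cdots a_{i_n}|^2$ to realize the product metric via products of the given sections, with the paper carrying out the computation directly at level $rs$ while you first reduce to a common $N$ and then multiply pairwise. The only cosmetic difference is that the paper combines your two steps into a single expansion indexed by $(i_1,\dots,i_s,j_1,\dots,j_r)$.
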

\begin{proof}
Suppose that $h_B$ (resp. $h_D$) is defined by global sections of $\Oo(rB)$ (resp. $\Oo(sD)$), and $\theta_{rB}: \Oo(rB)|_U \simeq \Oo_U$ (resp. $\theta_{sD}: \Oo(sD)|_U \simeq \Oo_U$) is a trivialization. Then the local weights are
\[
\vphi_B = \frac{1}{2r}(\log \sum_{i=1}^{m_B}|\theta_{rB}(\sigma_i)|^2), \quad \vphi_D = \frac{1}{2s}(\log \sum_{j=1}^{m_D}|\theta_{sD}(\tau_j)|^2),
\] where $\sigma_i \in H^0(U, \Oo(rB)), \tau_j \in H^0(U, \Oo(sD))$. The local weight for $B+D$ is
\[
\begin{split}
\vphi_B+\vphi_D &= \frac{1}{2rs}\log\left((\sum_{i=1}^{m_B}|\theta_{rB}(\sigma_i)|^2)^s \cdot (\sum_{j=1}^{m_D}|\theta_{sD}(\tau_j)|^2)^r\right)\\
&=\frac{1}{2rs}\log\left( \sum_{\substack{i_1, \cdots, i_s\\ j_1 \cdots j_r}} |\theta_{rB}(\sigma_{i_1}) \cdots \theta_{rB}(\sigma_{i_s})\cdot  \theta_{sD}(\tau_{j_1}) \cdots \theta_{sD}(\tau_{j_r})|^2\right).
\end{split}
\] By $\sigma_{i_1} \cdots \sigma_{i_s}\cdot  \tau_{j_1} \cdots \tau_{j_r} \in H^0(U, \Oo(rs(B+D)))$, the claim follows.
\end{proof}

\subsection{Push-forward metrics defined by global sections}\label{subsection: pushforward metrics}

Let $f: X' \to X$ be a proper modification between normal complex spaces and $D$ be a $\Qq$-Cartier divisor on $X'$. Suppose that $B = f_*D$ is still a $\Qq$-Cartier divisor on $X'$. Suppose that $D$ admits a metric $h'$. In general, there is no push-forward metric $f_*h'$ of $h'$. However, if $h'$ is given by global sections, then we can naturally define $f_*h'$ as follows.

Recall that for a $\sigma \in \mathscr{M}_{X'}(X')$, $f_*\sigma$ is defined as the meromorphic extension of $\sigma|_{X-f(\Exc(f))}$ on $X$ (see \cite[Chapter II (10.2)]{Dem12}). This is possible as $\codim_Xf(\Exc(f)) \geq 2$. Hence $f_*\sigma \in \mathscr M_X(X)$.

Suppose that $mD$ and $mB$ are Cartier divisors. If $\sigma \in H^0(X', \Oo_{X'}(mD))$, that is, $\sigma\in \mathscr{M}_{X'}(X')$ such that $\di(\sigma) + mD \geq 0$, then $f_*\sigma \in H^0(X, \Oo_X(mB))$ because
 \[
 \di(f_*\sigma)+mB = f_*\di(\sigma)+f_*(mD) \geq 0.
 \] Therefore, if $h'$ is the metric of $D$ defined by global sections as \eqref{eq: definition of metric}, then the push-forwards of these sections also define a metric $h\coloneqq f_*h'$ for $B$. We call this metric the push-forward of $h'$.
 
 \begin{lemma}\label{le: comparing pullback metrics}
Under the above notation and assumptions, let $D+F= f^*B$ with $F$ an $f$-exceptional divisor ($F$ may not be effective). Then $f^*h$ is a metric for $D+F$ such that
\begin{equation}\label{eq: comparing metrics}
h'\cdot \hbar_F = f^*h,
\end{equation}
where $\hbar_F$ is the metric defined as \eqref{eq: for non-effective div}.
\end{lemma}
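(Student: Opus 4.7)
The statement concerns an equality between two $\Qq$-metrics for the same $\Qq$-Cartier divisor $D+F=f^*B$, so the strategy is to compute local weights for both sides under a single compatible trivialization and check they coincide. Preliminarily, I would verify that $f^*h$ is genuinely a metric for $f^*B$: for any resolution $\nu:W\to X'$, the composition $f\circ\nu:W\to X$ is a projective modification from a smooth space, hence a resolution of $X$; thus the $L^1_{\rm loc}$-property of the weight of $h$ on $f\circ\nu$ translates directly into the $L^1_{\rm loc}$-property for $f^*h$ on $\nu$, giving Definition \ref{def: singular metric on varieties}. (In fact since $h$ has non-negative curvature current, so does $f^*h$, and Proposition \ref{prop: independent} makes this routine.)

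\textbf{Local computation.} Shrink to an open $U\subset X$ and let $V=f^{-1}(U)$. After passing to a divisible $m$, we may assume $mB$, $mD$, $mF$ are all Cartier and choose trivializations $\theta:\Oo(mB)|_U\simeq \Oo_U$, $\theta_D:\Oo(mD)|_V\simeq\Oo_V$ and $\theta_F:\Oo(mF)|_V\simeq\Oo_V$ with dual meromorphic frames $e,e_D,e_F$. Under the natural multiplication isomorphism $\Oo(mD)\otimes\Oo(mF)\simeq \Oo(m(D+F))=\Oo(f^*mB)$, the frame $e_De_F$ trivializes $\Oo(f^*mB)|_V$, so after renormalizing we may take $f^*e=e_De_F$; equivalently, for any meromorphic section $\tau$ of $\Oo(f^*mB)$ one has $f^*\theta(\tau)=\theta_D(\tau/e_F)\cdot\theta_F(e_F)$, and in particular
\[
f^*\theta(\sigma_i)=\theta_D(\sigma_i)\cdot \theta_F(1),
\]
since $\sigma_i=\theta_D(\sigma_i)\cdot e_D$ as meromorphic functions. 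The key algebraic input is that $f$ is a proper modification, so for each $\sigma_i\in H^0(X',\Oo(mD))$, viewed as a meromorphic function on $X'$, one has $f^*(f_*\sigma_i)=\sigma_i$ on the dense open set $X'-\Exc(f)$, hence as meromorphic sections of $\Oo(m(D+F))$ on all of $V$.

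\textbf{Matching the weights.} By Remark \ref{rmk: in terms of weight}, the weight of $f^*h$ on $m(D+F)$ under $f^*\theta$ is
\[
f^*\vphi_h=\frac{1}{2m}\log\sum_i |f^*\theta(f^*f_*\sigma_i)|^2=\frac{1}{2m}\log\sum_i |\theta_D(\sigma_i)|^2\;+\;\frac{1}{m}\log|\theta_F(1)|,
\]
where the first summand is exactly the local weight $\vphi_{h'}$ of $h'$ under $\theta_D$, and the second is the local weight of $\hbar_F$ under $\theta_F$ (this is the definition of $\hbar_F$ from \eqref{eq: for non-effective div}: splitting $F=F_+-F_-$ and $\theta_F=\theta_{F_+}\theta_{F_-}^{-1}$ recovers $\log|\theta_{F_+}(1)|-\log|\theta_{F_-}(1)|$, which is manifestly independent of the chosen decomposition by the paragraph following \eqref{eq: for non-effective div}). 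Since the weight of $h'\cdot\hbar_F$ is, by definition, the sum of the weights, we conclude $f^*h=h'\cdot\hbar_F$ as $\Qq$-metrics.

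\textbf{Main obstacle.} The only subtle point is bookkeeping with trivializations when $F$ is not effective: the ``canonical section'' $1$ is only meromorphic on $\Oo(mF)$, so one has to be careful that $\theta_F(1)$, $f^*\theta(\sigma_i)$ and the identification $f^*e=e_De_F$ are all interpreted in the sheaf $\mathscr{M}_{X'}$ of meromorphic functions, not in $\Oo_{X'}$. Once this is granted, everything reduces to the tautology $f^*f_*\sigma_i=\sigma_i$ together with the multiplicativity of the identification $\Oo(mD)\otimes\Oo(mF)\simeq\Oo(m(D+F))$.
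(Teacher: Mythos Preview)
Your proposal is correct and follows essentially the same approach as the paper: choose compatible trivializations so that $f^*\theta_B=\theta_D\cdot\theta_F$, use the identity $f^*(f_*\sigma_i)=\sigma_i$ for meromorphic sections, and then match the local weights term by term. The paper's proof is slightly terser (it does not separately verify the $L^1_{\rm loc}$-property for $f^*h$, nor dwell on the non-effective $F$ bookkeeping), but the computation is the same.
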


\begin{proof}
Suppose that $h'$ is defined by sections $\sigma_i \in H^0(X', \Oo_{X'}(mD)), i=1 ,\cdots, k$, then $\sigma_i =f^*(f_*\sigma_i) \in H^0(X', \Oo_{X'}(mD+mF)), i=1 ,\cdots, k$.

Let $\theta: \Oo_{X'}(mD)|_{U} \simeq \Oo_U, \theta_F: \Oo_{X'}(mF)|_U \simeq \Oo_U$ be trivializations on $U\subset X'$, and $\theta_B: \Oo_X(mB)|_V \simeq \Oo_V$ be a trivialization on $V\subset X$ such that $U\subset f^{-1}(V)$ and $f^*\theta_B = \theta \theta_F$. Then $h'\cdot \hbar_F$ has the local weight
\[
\frac{1}{2m} (\log \sum_i |\theta(\sigma_i)|^{2} + \log |\theta_F(1)|^{2}) = \frac{1}{2m} \log (\sum_i |\theta\theta_F(\sigma_i)|^{2}),
\] where $\sigma_i$ in $\theta(\sigma_i)$ is a section with respect to the divisor $mD$ while $\sigma_i$ in $\theta\theta_F(\sigma_i)$ is a section with respect to the divisor $mD+mF$.

The local weight for $h$ is 
\[
\frac{1}{2m} \log (\sum_i|\theta_B(f_*\sigma_i)|^{2}).
\] Thus $f^*h$ has the local weight
\[
f^*\left(\frac{1}{2m}\log (\sum_i|\theta_B(f_*\sigma_i)|^{2})\right)
\] under the trivialization $f^*\theta_B$  (see \eqref{eq: pullback of metric}). The claim follows from
\[
f^*(\theta_B(f_*\sigma_i)) = (f^*\theta_B)(f^*(f_*\sigma_i))=(\theta \theta_F)(\sigma_i).
\]
\end{proof}

\section{Metrics for the abundant nef parts}\label{sec: 4}

\subsection{Approximations for the abundant nef part}\label{subsec: Algebraic approximation for the abundant nef part}

Let $\pi: X \to \De$ be a projective contraction to the disc $\De$. Suppose that $f: X' \to X/\De$ is a projective modification and $M'$ is a $\Qq$-divisor on $X'$. Moreover, suppose that $g: X' \to Z/\De$ is a proper contraction such that $M' \sim_\Qq g^*H'/\De$, where $H'$ is a nef and big $\Qq$-divisor over $\De$.

\[
\xymatrix{
X \ar[d]_\pi  & X' \ar[d]^g \ar[l]_f\\
\De  & Z\ar[l]_\tau }\\
\]  

Take projective resolutions $\ti f: X'' \to X', \ti\tau: Z'' \to Z$ such that $\ti g \coloneqq {\ti\tau}^{-1}\circ g\circ{\ti{f}}: X'' \to Z''$ is a morphism. By Hironaka's Chow lemma (see \cite[Chapter VII, Theorem 2.8]{Pet94b}), we can further assume that $Z'' \to \De$ and $\ti g$ are projective.  Let $M''=\ti f^*M'$, then $M''$ is the pull-back of the nef and big$/\De$ divisor $\ti\tau^*H'$. Hence, replacing $X', Z, M'$ by $X'', Z'', M''$, we can assume that $Z$ is smooth and $g, \tau$ are projective.

As the Picard group $\Pic(\De)$ is trivial, for $k=\Zz, \Qq$ and $k$-divisors $B, D$ on $X$, $B \sim_{k} D/\De$ is the same as $B \sim_{k} D$.

The following lemma is a direct consequence of the negativity lemma (see \cite[Lemma 3.39]{KM98}). Note that such result also holds for normal complex spaces (see \cite[Proof of Lemma 3.40]{KM98}).

\begin{lemma}\label{le: negativity}
Let $X' \xrightarrow{f} X \to S$ be a g-pair with a nef$/S$ divisor $M'$ on $X'$ and the nef part $M =f_*M'$. Suppose that $M$ is $\Qq$-Cartier, then 
\begin{equation}\label{eq: pullback nef part}
f^*M = M'+\Upxi,
\end{equation} where $\Upxi \geq 0$ is an $f$-exceptional divisor. 
\end{lemma}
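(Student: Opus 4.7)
The plan is to define $\Upxi$ explicitly as the difference $f^*M - M'$ and then invoke the negativity lemma to get effectivity. Concretely, set
\[
\Upxi \coloneqq f^*M - M',
\]
which is a well-defined $\Qq$-Cartier divisor on $X'$ since $M$ is $\Qq$-Cartier by hypothesis and $M'$ is $\Qq$-Cartier by the definition of a g-pair.

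First I would verify that $\Upxi$ is $f$-exceptional. This is immediate from pushing forward: $f_*\Upxi = f_*f^*M - f_*M' = M - M = 0$, where $f_*f^*M = M$ because $f$ is a proper modification and $M$ is $\Qq$-Cartier on $X$. Since any prime divisor on $X'$ that is not $f$-exceptional must appear with a nonzero coefficient in $f_*\Upxi$ if it appears in $\Upxi$, we conclude $\Upxi$ is supported on $\Exc(f)$.

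Next I would establish that $-\Upxi = M' - f^*M$ is nef$/X$. The point is that nef$/S$ implies nef$/X$: any curve $C \subset X'$ contracted by $f$ is also contracted by the composition $X' \to X \to S$, so $M' \cdot C \geq 0$ by the nef$/S$ hypothesis on $M'$. Moreover $f^*M \cdot C = M \cdot f_*C = 0$ for such a curve, since $f_*C = 0$. Hence $-\Upxi$ has non-negative intersection with every $f$-contracted curve, i.e.\ $-\Upxi$ is $f$-nef.

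Finally I would apply the negativity lemma (\cite[Lemma 3.39]{KM98}, available in the analytic setting by the remark preceding the statement) to the $f$-nef $\Qq$-Cartier divisor $-\Upxi$: it gives that $\Upxi \geq 0$ iff $f_*\Upxi \geq 0$, and the latter is trivially satisfied since $f_*\Upxi = 0$. Thus $\Upxi \geq 0$, completing the proof. The only genuinely non-routine part is the applicability of the negativity lemma in the complex-analytic setting, which the paper has already pointed to via \cite[Proof of Lemma 3.40]{KM98}, so no separate argument is needed there.
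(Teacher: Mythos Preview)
Your proof is correct and is precisely the unpacking of what the paper means by ``direct consequence of the negativity lemma'': you define $\Upxi = f^*M - M'$, check $f$-exceptionality via $f_*\Upxi = 0$, verify $-\Upxi$ is $f$-nef using that $M'$ is nef$/S$ and $f^*M$ is numerically $f$-trivial, and then invoke \cite[Lemma 3.39]{KM98}. This is exactly the intended argument, just written out in full.
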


\begin{lemma}\label{le: a decomposition}
Under the above notation and assumptions. For a g-pair $X' \xrightarrow{f} X \xrightarrow{\pi} \De$ with the abundant nef part. Let $X_0 \subset X$ be the fiber over $0\in\De$. Let $\ti X_0$ be the strict transform of $X_0$ on $X'$. There exist divisors $E, F$ on $X'$ that satisfy the following:
\begin{enumerate}
\item $E, F$ are effective $\Qq$-divisors without common components,
\item $E$ is  $f$-exceptional,
\item for each $k \in \Nn$, there exists an ample $\Qq$-Cartier divisor $H_k$ on $Z$ such that $$M' + \frac 1 k E \sim_{\Qq} g^*H_k + \frac 1 k F,$$

\item $\ti X_0$ is not a component of $E \cup F$.
\end{enumerate}
\end{lemma}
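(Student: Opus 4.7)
The plan is to apply Kodaira's lemma to the nef-and-big divisor $H'$ on $Z$, pull the resulting decomposition back to $X'$ through $g$, and then use the triviality of $\Pic(\De)$ to absorb any $\ti X_0$ component of the error term into an $f$-exceptional divisor on the opposite side of the equivalence. Concretely, I would first write $H' \sim_\Qq A + D_0$ with $A$ an ample$/\De$ $\Qq$-Cartier divisor on $Z$ and $D_0 \geq 0$ an effective $\Qq$-divisor (relative Kodaira's lemma, using that $Z$ is smooth and projective over $\De$ and $H'$ is nef and big$/\De$). For each $k \in \Nn$ I would then set
\[
H_k := \tfrac{k-1}{k}\, H' + \tfrac{1}{k}\, A,
\]
which is ample$/\De$ as a positive sum of a nef$/\De$ and an ample$/\De$ divisor. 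Combining with $M' \sim_\Qq g^*H'$ gives
\[
M' \sim_\Qq g^*H_k + \tfrac{1}{k}\, g^*D_0.
\]

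The next step is to strip $\ti X_0$ out of $g^*D_0$. Write $g^*D_0 = c\,\ti X_0 + G$, where $c \in \Qq_{\geq 0}$ is the coefficient of $\ti X_0$ in $g^*D_0$ and $G \geq 0$ has no $\ti X_0$ component. Because the origin is a principal divisor on $\De$ (cut out by the coordinate), $\pi^*\{0\}$ is principal on $X$ and differs from $X_0$ at most by a positive integer multiplicity, so $X_0 \sim_\Qq 0$. Pulling back through $f$, one has $f^*X_0 = \ti X_0 + E_0 \sim_\Qq 0$ on $X'$, where $E_0 \geq 0$ is the (necessarily $f$-exceptional) difference. Hence $c\,\ti X_0 \sim_\Qq -c E_0$, and substituting yields
\[
M' + \tfrac{c}{k}\, E_0 \sim_\Qq g^*H_k + \tfrac{1}{k}\, G.
\]

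Finally, I would take $E := c E_0$ and $F := G$ and cancel any shared components, which can only be $f$-exceptional prime divisors appearing both in $E_0$ and in $G$. The resulting pair is effective, has no common components, and $E$ is $f$-exceptional because $E_0$ is; moreover $\ti X_0$ is not a component of $E$ (which is $f$-exceptional, whereas $f(\ti X_0) = X_0$ is a divisor), and $\ti X_0$ is not a component of $F$ by construction. The main obstacle is precisely this last rearrangement: a purely effective Kodaira decomposition on $Z$ has no reason to avoid the image $g(\ti X_0) \subset Z_0$, so without the linear-equivalence trick $\ti X_0 + E_0 \sim_\Qq 0$ afforded by the triviality of $\Pic(\De)$, we could not prevent $\ti X_0$ from entering $F$. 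Once this step is in hand, everything else is routine bookkeeping.
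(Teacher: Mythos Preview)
Your proposal is correct and follows essentially the same route as the paper: apply Kodaira's lemma on $Z$ to write $H' \sim_\Qq (\text{ample}) + (\text{effective})$, pull back by $g$, then use $\ti X_0 + E_0 = f^*X_0 \sim_\Qq 0$ to move the $\ti X_0$-component of the effective error into an $f$-exceptional divisor on the left, and finally cancel common components. The only cosmetic difference is that the paper routes the subtraction through $Z$ (writing $B - c\,\tau^*(0) = \Theta - E^-$ and pulling back) before cancelling, whereas you stay on $X'$ throughout; your bookkeeping is slightly more direct but the content is the same.
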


\begin{proof}
There is a nef and big$/\De$ divisor $H'$ on a smooth complex space $Z$ such that $M' \sim_{\Qq} g^*H'$. Then for any $k \in \Nn$, $H'\sim_{\Qq} H_k+\frac 1 k B$ where $H_k$ is ample and $B \geq 0$ (see \cite[Proposition 2.6.1 (3)]{KM98}). Let $c=\mult_{X_0} (f_*g^*B)$, then 
\begin{equation}\label{eq: subtract from g^*B}
g^*B - c(\pi \circ f)^*(0) =g^*B   - c (\ti X_0 + E_f)
\end{equation} does not contain $\ti X_0$ in its support, where $E_f$ is some $f$-exceptional divisor (here $0\in \De$ is viewed as a divisor and $(\pi\circ f)^*(0)$ is the pull-back of $0$). However, $g^*B - c(\pi \circ f)^*(0)$ may not be effective. 

We have
\[
g^*B - c(\pi \circ f)^*(0) = g^*B - c(\tau \circ g)^*(0) = g^*(B - c\tau^*(0)).
\] Write
\begin{equation}\label{eq: subtract from B}
B - c\tau^*(0) = \Theta - E^-,
\end{equation} where $\Theta, E^-$ are $\Qq$-Cartier effective divisors without common components. 
But $g^*\Theta, g^*E^-$ may have common components. Let 
\[
\Lambda \coloneqq g^*\Theta \wedge g^*E^-
\] be the effective $\Qq$-Cartier divisor such that $$\mult_P \Lambda=\min\{\mult_P g^*\Theta, \mult_P g^*E^-\}$$ for each prime divisor $P$. Then $g^*\Theta- \Lambda, g^*E^-- \Lambda$ are effective divisors without common components.

By \eqref{eq: subtract from B},
\[
g^*B - c(\pi \circ f)^*(0) = g^*\Theta- g^*E^- = (g^*\Theta- \Lambda)-(g^*E^-- \Lambda).
\] Moreover, the negative component (i.e. the component with negative coefficients) of $g^*B - c(\pi \circ f)^*(0)$ is $f$-exceptional by \eqref{eq: subtract from g^*B}. Therefore, $g^*E^-- \Lambda$ is $f$-exceptional. By the choice of $c$ (see \eqref{eq: subtract from g^*B}), 
\[
\ti X_0 \not\subset \Supp ((g^*\Theta- \Lambda)\cup(g^*E^-- \Lambda)).
\]

Let $E = g^*E^- - \Lambda$ and $F = g^*\Theta-\Lambda$, then
\[
M' +\frac 1 k E \sim_{\Qq} g^*H_k+\frac 1 k F.
\]
\end{proof}

\subsection{Construction metrics for the abundant nef part}

Under the above notation and assumptions, for the morphism $f: X' \to X$, let $\ti X_0$ be the strict transform of $X_0$. Replacing $f$ by a projective resolution, we can assume that $\ti X_0$ is smooth. Let $f_0: \ti X_0 \to X_0$ be the corresponding morphism. Set $E_0\coloneqq E|_{\ti X_0}, F_0\coloneqq F|_{\ti X_0}$ which are well-defined $\Qq$-Cartier divisors by Lemma \ref{le: a decomposition} (4), and set $\Upxi_0\coloneqq \Upxi|_{\ti X_0}$  (see Lemma \ref{le: negativity} for the definition of $\Upxi$).

\begin{lemma}\label{le: a metric on X_0}
After shrinking $\De$ around $0$, there is a metric $h_k'$ (non-canonically constructed) for the $\Qq$-Cartier divisor $M'+\frac 1 k E$ such that $h'_k$ is a non-negative metric defined by global sections as \eqref{eq: definition of metric}. Moreover,
\begin{enumerate}
\item $h'_k|_{\ti X_0} \not\equiv \infty$, 
\item $\vphi_k' \approx \frac 1 k \vphi_F$, where $\vphi_k', \vphi_F$ are the local weights of $h_k', \hbar_F$ respectively, and 
\item $\vphi_{k,0}' \approx \frac 1 k \vphi_{F_0}$, where $\vphi_{k,0}', \vphi_{F_0}$ are the local weights of $h'_k|_{\ti X_0}, \hbar_{F_0}$ respectively.
\end{enumerate}
\end{lemma}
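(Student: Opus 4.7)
The plan is to construct $h'_k$ from global sections pulled back along $g$ and twisted by the canonical section of the effective divisor $\frac{1}{k}F$. First, I would invoke Lemma \ref{le: a decomposition}(3) to write $M' + \frac{1}{k}E \sim_\Qq g^*H_k + \frac{1}{k}F$ with $H_k$ ample on $Z/\De$. After shrinking $\De$ around $0$ so that $Z$ becomes relatively compact over $\De$, I would fix $m\in\Nn$ divisible enough that $m(M'+\frac{1}{k}E)$, $mg^*H_k$ and $\frac{m}{k}F$ are all Cartier, and that $mH_k$ is generated by global sections on $Z$; the latter uses ampleness of $H_k$ over the Stein base $\De$ combined with standard coherence for projective morphisms over Stein spaces. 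Fix a finite generating set $\tau_1,\ldots,\tau_N \in H^0(Z,\Oo(mH_k))$ with no common zero.

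Next, letting $s_F$ denote the canonical section of $\Oo(\frac{m}{k}F)$, I would form
\[
\sigma_i \coloneqq (g^*\tau_i)\otimes s_F \;\in\; H^0\bigl(X',\Oo(m(g^*H_k+\tfrac{1}{k}F))\bigr),\qquad i=1,\ldots,N,
\]
and, via the $\Qq$-linear equivalence (Remark \ref{rmk: metric for linear equiv}), view the $\sigma_i$ as sections of $m(M'+\frac{1}{k}E)$. Let $h'_k$ be the metric they define as in \eqref{eq: definition of metric}; by construction it has non-negative curvature. Choosing compatible trivializations $\theta=\theta'\cdot\theta_F$, where $\theta'$ trivializes $\Oo(mg^*H_k)$ and $\theta_F$ trivializes $\Oo(\frac{m}{k}F)$, the identity $\theta(\sigma_i)=\theta'(g^*\tau_i)\cdot\theta_F(s_F)$ makes the local weight split as
\[
\vphi'_k \;=\; \tfrac{1}{k}\vphi_F \;+\; \psi,\qquad \psi \;\coloneqq\; \tfrac{1}{2m}\log\sum_{i=1}^N |\theta'(g^*\tau_i)|^2.
\]
Because the $\tau_i$ have no common zero on $Z$, their pullbacks have no common zero on $X'$, so $\psi$ is smooth; after the shrinking of $\De$, $X'$ is relatively compact over $\De$ and $\psi$ is globally bounded, which yields (2).

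For (1) and (3), Lemma \ref{le: a decomposition}(4) ensures $\ti X_0 \not\subset \Supp F$, so $F|_{\ti X_0} = F_0$ is a well-defined $\Qq$-Cartier divisor and $\vphi_F|_{\ti X_0}=\vphi_{F_0}\not\equiv-\infty$. Restricting the weight decomposition to $\ti X_0$ gives $\vphi'_{k,0} = \frac{1}{k}\vphi_{F_0}+\psi|_{\ti X_0}$ with $\psi|_{\ti X_0}$ smooth and bounded, hence $h'_k|_{\ti X_0}\not\equiv\infty$ and $\vphi'_{k,0}\approx \frac{1}{k}\vphi_{F_0}$.

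The main obstacle I expect is the base-point-freeness input: upgrading relative ampleness of $H_k$ over the non-compact Stein disc $\De$ to global generation of $mH_k$ on $Z$ is where the shrinking of $\De$ enters essentially. The remaining task of transporting the metric constructed for the Cartier divisor $m(g^*H_k+\frac{1}{k}F)$ to one for the linearly equivalent $m(M'+\frac{1}{k}E)$ is routine bookkeeping handled by Remark \ref{rmk: metric for linear equiv}, and the key property — that $\psi$ is finite and bounded on both $X'$ and $\ti X_0$ — follows as soon as one has basepoint-freeness, so the rest of the argument is automatic.
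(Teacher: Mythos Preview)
Your proposal is correct and follows essentially the same approach as the paper: the paper shrinks $\De$ so that $|mH_k|$ embeds $Z$ in $\Pp_\De^r$, pulls back the Fubini-Study metric to obtain a smooth non-negative metric $\chi_k$ for $g^*H_k$, and sets $h'_k=\chi_k\cdot\hbar_F^{1/k}$; this is exactly your construction phrased via the embedding, since the Fubini-Study pullback is the metric defined by the basepoint-free sections $g^*\tau_i$, and your $\psi$ is the paper's $\vphi_{\chi_k}$. The only cosmetic difference is that the paper invokes Proposition~\ref{prop: product of metric defined by global sections} to certify that the product metric is defined by global sections, whereas you exhibit the product sections $\sigma_i=(g^*\tau_i)\otimes s_F$ directly.
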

\begin{proof} 
We have $M' + \frac 1 k E \sim_\Qq g^*H_k + \frac 1 k F$. Because $H_k$ is ample, after shrinking $\De$ around $0$, $|mH_k|$ induces an embedding $Z \hookrightarrow \Pp_\De^r$ for some $m\in \Nn$. Pulling back a Fubini-Study metric of $\Oo_{\Pp_\De^r}(1)$, we have a positive and continuous metric $h_{mH_k}$ for $mH_k$ which is defined by global sections. Let 
\[
h_{H_k} \coloneqq (h_{mH_k})^{\frac 1 m} \quad \text{and} \quad  \chi_k \coloneqq g^*h_{H_k}.
\] Then $\chi_k$ is defined by the global sections of $g^*H_k$, and it is a smooth metric with non-negative curvature current for the semi-ample divisor $g^*H_k$. Multiplying with the non-negative metric $\hbar_F$ for $F$ (see \eqref{eq: for non-effective div}), we get the desired metric 
\[
h_k' \coloneqq \chi_k \cdot \hbar_F^{\frac 1 k}
\] for $g^*H_k + \frac 1 k F$, and thus for $M' + \frac 1 k E$ (see Remark \ref{rmk: metric for linear equiv}). As $\ti X_0 \not\subset \Supp F$, $h'|_{\ti X_0} \not\equiv \infty$. This is (1).

(2) follows as the local weight $\vphi_{\chi_k}$ for $\chi_k$ is a continuous function. We can choose trivializations such that $\vphi'_{k} = \vphi_{\chi_k} + \frac 1 k \vphi_F$.

We have $h_k'|_{\ti X_0} = \chi_k|_{\ti X_0} \cdot (\hbar_F|_{\ti X_0})^{\frac 1 k}$ and $\hbar_F|_{\ti X_0} = \hbar_{F_0}$. As the local weight for $ \chi_k|_{\ti X_0}$ is still a continuous function, (3) holds for the same reason as (2). 
\end{proof}

By Lemma \ref{le: a decomposition}, $E$ is $f$-exceptional, and thus 
\[
M =f_*M'=f_*(M'+\frac 1 k E)\sim_{\Qq} f_*(g^*H_k + \frac 1 k F).
\] Because $g^*H_k$ and $\frac 1 k F$ admit metrics $\chi_k$ and $\hbar_{F}^{\frac 1 k}$ which are both defined by global sections, $h_k' = \chi_k \cdot \hbar_{F}^{\frac 1 k}$ is also defined by global sections by Proposition \ref{prop: product of metric defined by global sections}. Moreover, $h_k'$ is a metric for $M'+\frac 1 k E$ (see Remark \ref{rmk: metric for linear equiv}). Thus $h_k \coloneqq f_*h_k'$ is a metric for $M$ which is still defined by global sections (see Section \ref{subsection: pushforward metrics}). 
 
 \begin{lemma}\label{le: comparing h with h_k'}
Under the above notation and assumptions, let $h_k= f_*h_k'$ be the metric for $M=f_*(M'+\frac 1 k E)$. Then
\begin{enumerate}
\item $h_k$ has non-negative curvature current,
\item $h_k|_{X_0} \not\equiv \infty$,
\item $f^*\vphi_k+\frac 1 k \vphi_E \approx \frac 1 k \vphi_F+\vphi_{\Upxi}$, where $\vphi_k, \vphi_E, \vphi_F, \vphi_{\Upxi}$ are local weights for $h_k, \hbar_E, \hbar_F,\hbar_\Upxi$ respectively, and
\item $f_0^*\vphi_{k,0}+\frac 1 k \vphi_{E_0} \approx \frac 1 k \vphi_{F_0}+\vphi_{\Upxi_0}$, where $\vphi_{k,0}, \vphi_{E_0}, \vphi_{F_0}, \vphi_{\Upxi_0}$ are local weights for $h_{k}|_{X_0}, \hbar_{E_0}, \hbar_{F_0}, \hbar_{\Upxi_0}$ respectively.

\end{enumerate}
\end{lemma}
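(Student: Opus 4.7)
The strategy is to transport the approximation estimates for $h_k'$ (provided by Lemma \ref{le: a metric on X_0}) down to $X$ via the comparison of pull-back and push-forward metrics recorded in Lemma \ref{le: comparing pullback metrics}. Part (1) follows immediately from the construction in Section \ref{subsection: pushforward metrics}: since $h_k'$ is defined by global sections $\sigma_i \in H^0(X', \Oo_{X'}(m(M'+\tfrac{1}{k}E)))$, the push-forward metric $h_k = f_*h_k'$ is defined by the sections $f_*\sigma_i \in H^0(X, \Oo_X(mM))$ (using $f_*E = 0$ from Lemma \ref{le: a decomposition}(2)). Its local weight is $\tfrac{1}{2m}\log \sum_i |\theta(f_*\sigma_i)|^2$, which is psh by the computation in Section \ref{subsection: metrics defined by global sections}; combined with Proposition \ref{prop: independent} this gives non-negative curvature.

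For (3), I apply Lemma \ref{le: comparing pullback metrics} to $D := M' + \tfrac{1}{k}E$ and $B := f_*D = M$. The exceptional error is read off from Lemma \ref{le: negativity}:
\[
f^*B - D \;=\; f^*M - M' - \tfrac{1}{k}E \;=\; \Upxi - \tfrac{1}{k}E,
\]
which is $f$-exceptional (though not necessarily effective, which is why \eqref{eq: for non-effective div} is used). Lemma \ref{le: comparing pullback metrics} then gives $f^*h_k = h_k' \cdot \hbar_{\Upxi - \frac{1}{k}E}$, i.e.
\[
f^*\vphi_k \;=\; \vphi_k' + \vphi_\Upxi - \tfrac{1}{k}\vphi_E,
\]
and combining with $\vphi_k' \approx \tfrac{1}{k}\vphi_F$ from Lemma \ref{le: a metric on X_0}(2) yields (3). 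Part (2) now comes for free: Lemma \ref{le: a decomposition}(4) ensures $\ti X_0$ is not contained in $\Supp(E \cup F)$, and $\ti X_0 \not\subset \Supp \Upxi$ because $\Upxi$ is $f$-exceptional while $\ti X_0$ is the strict transform of the divisor $X_0$. Hence the right-hand side of (3) restricts to a generically finite function on $\ti X_0$, which forces $h_k|_{X_0} \not\equiv \infty$.

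For (4), I restrict the equality of metrics obtained in the second step to $\ti X_0$. Since $f_0 = f|_{\ti X_0} : \ti X_0 \to X_0$ is the induced morphism and $\ti X_0$ meets each of $E, F, \Upxi$ properly, the restriction formula \eqref{eq:restriction} gives $\hbar_E|_{\ti X_0} = \hbar_{E_0}$, $\hbar_F|_{\ti X_0} = \hbar_{F_0}$, and $\hbar_\Upxi|_{\ti X_0} = \hbar_{\Upxi_0}$, while compatibility of pull-back with restriction gives $(f^*h_k)|_{\ti X_0} = f_0^*(h_k|_{X_0})$. Restricting the weight identity then produces
\[
f_0^*\vphi_{k,0} \;=\; \vphi_{k,0}' + \vphi_{\Upxi_0} - \tfrac{1}{k}\vphi_{E_0},
\]
and invoking Lemma \ref{le: a metric on X_0}(3) ($\vphi_{k,0}' \approx \tfrac{1}{k}\vphi_{F_0}$) yields (4). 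The main point of friction in this plan is not analytic but bookkeeping: one has to keep straight which divisors are effective, which are $f$-exceptional, and which have supports disjoint from $\ti X_0$, so that Lemma \ref{le: comparing pullback metrics}, the restriction formula \eqref{eq:restriction}, and the $\approx$-relations of Lemma \ref{le: a metric on X_0} can all be applied legitimately in sequence.
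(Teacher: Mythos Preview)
Your proof is correct and follows essentially the same route as the paper: both derive the key metric identity $f^*h_k = h_k' \cdot \hbar_{\Upxi - \frac{1}{k}E}$ from Lemma \ref{le: comparing pullback metrics}, combine it with the estimates of Lemma \ref{le: a metric on X_0}, and then restrict to $\ti X_0$ using \eqref{eq:restriction} and $(f^*h_k)|_{\ti X_0} = f_0^*(h_k|_{X_0})$. The only cosmetic difference is that you prove (3) first and deduce (2) from it, whereas the paper establishes the metric relation while proving (2) and then reuses it for (3); the logical content is identical.
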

\begin{proof}

As the metric defined by global sections always has non-negative curvature current, we have (1). 

For (2), by Lemma \ref{le: a metric on X_0}, $h'_k|_{\ti X_0} \not\equiv \infty$. By 
\[
f^*M = f^*(f_*(M'+\frac 1 k E))=(M'+\frac 1 k E)+\Upxi - \frac 1 k E,
\] Lemma \ref{le: comparing pullback metrics} implies $f^*h_k = h_k' \cdot \hbar_{\Upxi-\frac 1 k E}$, which is the same as
\begin{equation}\label{eq: relation of metrics}
f^*h_k \cdot \hbar_E^{\frac 1 k} = h_k' \cdot \hbar_\Upxi.
\end{equation} As $E, \Upxi$ are $f$-exceptional divisors, $h_E|_{\ti X_0} \not\equiv \infty, h_\Upxi|_{\ti X_0} \not\equiv \infty$. Hence $f^*h_k|_{\ti X_0} \not\equiv \infty$. This implies $h_k|_{\ti X_0} \not\equiv \infty$ as $f_0$ is a proper modification.

(3) follows from \eqref{eq: relation of metrics} and Lemma \ref{le: a metric on X_0} (2).

For (4), restricting \eqref{eq: relation of metrics} to $\ti X_0$, we have
\[
f^*h_k|_{\ti X_0} \cdot \hbar_E^{\frac 1 k}|_{\ti X_0} = h_k'|_{\ti X_0} \cdot \hbar_\Upxi|_{\ti X_0}.
\] Moreover, $\hbar_E|_{\ti X_0} = \hbar_{E_0}, \hbar_\Upxi|_{\ti X_0} = \hbar_{\Upxi_0}$. By $f^*h_k|_{\ti X_0} = f_0^*(h_k|_{X_0})$, we have
\[
f_0^*(h_k|_{X_0}) \cdot \hbar_{E_0}^{\frac 1 k} = h_k'|_{\ti X_0} \cdot \hbar_{\Upxi_0}.
\] The result follows from Lemma \ref{le: a metric on X_0} (3).
\end{proof}

\begin{remark}
In application, it is possible to work with $h_{\min}$, the metric with minimal singularities for $M$ (which also makes sense on complex spaces under suitable modifications). Comparing with $h_k$, we have $\vphi_{\min} \preceq \vphi_k$ where $\vphi_{\min}$ is the local weight for $h_{\min}$. In the following, we adopt the more direct approach to work with $h_k$. 
\end{remark}

\section{Extension theorems and invariance of plurigenera}\label{sec: 5}

\subsection{Multiplier ideal sheaves}

Recall that for a complex manifold $X$ and a Cartier divisor $L$ with a metric $h$. If the local weight of $h$ is $\vphi$, then the multiplier ideal sheaf $\Ii(h)$ is the sheaf of germs of holomorphic functions $\alpha$ such that $|\alpha|^2e^{-2\vphi}$ is locally integrable. Therefore, 
\[
\begin{split}
H^0(U, \Ii(h) \otimes \Oo_X(L)) =\{&s \in H^0(U, \Oo_X(L)) \mid |\theta(s)|^2e^{-2\vphi} \in L^1_{\rm loc}(U), \\
&\text{where~}\theta \text{~is a trivialization of~}\Oo_X(L) \}.
\end{split}
\]

We have the following observation.

\begin{proposition}\label{claim: same integration}
Let $X$ be a complex manifold. Let $L$ be a Cartier divisor with a metric $h$ and $E$ be a Cartier divisor with a metric $\hbar_E$. If $s \in \mathscr M_X(X)$ is a global section of both $L$ and $L+E$ (i.e. $\di(s)+L \geq 0, \di(s)+L+E \geq 0$), then $s \in H^0(X, \Oo_X(L) \otimes \Ii(h))$ iff $s \in H^0(X, \Oo_X(L+E) \otimes \Ii(h\hbar_E))$.
\end{proposition}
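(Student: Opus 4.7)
The plan is to reduce the proposition to the observation that, once one unpacks the definitions, the local integrands defining the two $L^2$-conditions are \emph{literally} the same function on $U$. All of the apparent difference between them comes from how $s$ is identified as a local holomorphic function in the two trivializations, and this identification differs by exactly the factor that $\hbar_E$ contributes to the local weight, so the two cancel pointwise.

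First I would fix a point $x\in X$ and a neighborhood $U$ on which $E$ decomposes as $E=D_1-D_2$ with $D_1,D_2$ effective Cartier (Cartier divisors are locally principal, so this is always possible locally), together with trivializations $\theta_L:\Oo_X(L)|_U\simeq\Oo_U$ and $\theta_{D_i}:\Oo_X(D_i)|_U\simeq\Oo_U$. Set $\theta_E\coloneqq\theta_{D_1}/\theta_{D_2}$ and $\theta_{L+E}\coloneqq\theta_L\cdot\theta_E$. By \eqref{eq: for non-effective div}, $\hbar_E=\hbar_{D_1}\hbar_{D_2}^{-1}$, and its local weight relative to $\theta_E$ is
\[
\vphi_{\hbar_E}=\log|\theta_{D_1}(1)|-\log|\theta_{D_2}(1)|=\log|\theta_E(1)|,
\]
where $1\in\mathscr M_X$ is the canonical section of $\Oo_X(D_i)$. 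Consequently, the local weight of $h\hbar_E$ with respect to $\theta_{L+E}$ is $\vphi+\vphi_{\hbar_E}$, where $\vphi$ is the local weight of $h$ with respect to $\theta_L$.

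The next step is the matching identity between trivialized sections. Regarded as a section of $\Oo_X(L)$, the meromorphic function $s$ is identified with the local function $\theta_L(s)$; regarded as a section of $\Oo_X(L+E)$, it is identified with
\[
\theta_{L+E}(s)=\theta_L(s)\cdot\theta_E(1),
\]
which is the direct consequence of the definition $\Oo_X(D)=\{f\in\mathscr M_X\mid\di(f)+D\geq 0\}$ together with the product structure of the trivializations. The key pointwise cancellation on $U$ is now immediate:
\[
|\theta_{L+E}(s)|^2\,e^{-2(\vphi+\vphi_{\hbar_E})}=|\theta_L(s)|^2\,|\theta_E(1)|^2\,e^{-2\vphi}\,|\theta_E(1)|^{-2}=|\theta_L(s)|^2\,e^{-2\vphi}.
\]
Hence the integrand defining local membership of $s$ in $\Oo_X(L+E)\otimes\Ii(h\hbar_E)$ coincides pointwise with the one defining local membership in $\Oo_X(L)\otimes\Ii(h)$, so the two local $L^1_{\rm loc}$-conditions — and therefore the two global $H^0$-memberships — are equivalent.

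I do not anticipate a real obstacle; the proof is essentially a one-line computation. The only care needed is bookkeeping: verifying that $\theta_{L+E}(s)=\theta_L(s)\theta_E(1)$ is the correct local expression of $s$ as a section of $L+E$, and that this is consistent with the non-effective extension of $\hbar_E$ via \eqref{eq: for non-effective div} and its independence on the chosen decomposition $E=D_1-D_2$. Both are direct from the definitions of Section \ref{sec: 3}, after which the cancellation is automatic.
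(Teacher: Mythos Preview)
Your proposal is correct and follows essentially the same approach as the paper: both arguments boil down to the pointwise identity
\[
|\theta_{L+E}(s)|^2 e^{-2(\vphi+\vphi_{\hbar_E})} = |\theta_L(s)|^2 e^{-2\vphi},
\]
obtained by noting that the extra factor $\theta_E(1)$ in the trivialization of $s$ as a section of $L+E$ is cancelled by the local weight $\vphi_{\hbar_E}=\log|\theta_E(1)|$. The paper's version is marginally more direct in that it uses a single local meromorphic equation $\sigma_E$ of $E$ (so $\theta_E$ is multiplication by $\sigma_E$ and $\vphi_{\hbar_E}=\log|\sigma_E|$) rather than passing through the decomposition $E=D_1-D_2$ and \eqref{eq: for non-effective div}, but this is purely cosmetic.
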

\begin{proof}
Suppose that $\theta_L: \Oo(L)|_U \simeq \Oo_U$ is a trivialization of $\Oo(L)$ on $U \subset X$. Shrinking $U$, let $\sigma_E \in \mathscr M_X(U)$ be a local equation of $E$. There is a trivialization $\theta_E: \Oo(E)|_U \simeq \Oo_U$ by multiplying $\sigma_E$. Therefore, locally at a point $x \in U$,
\[
\int |\theta_L(s)|^2 e^{-2\vphi_{h}}~dV_X
=\int |\theta_L\theta_E(s)|^2 e^{-2\vphi_{h}}e^{-2\log|\sigma_E|}~dV_X,
\]where $\vphi_h$ is the local weight of $h$ corresponding to $\theta_L$.
\end{proof}

\subsection{Extension theorem for twisted pairs}

We need a modification of \cite[Theorem 3.1]{Tak07} (see Lemma \ref{le: extension}). In fact, we only work in the setting of \cite[Theorem 3.1]{Tak07} instead of generalizing this result (see Remark \ref{rmk: generalize Tak07}).

Let $\mu: W \to X$ be a log resolution and $\tau: W \to X \to \De$ be the corresponding morphism. Let $N$ be a Cartier divisor on $W$ and $\ti h_N$ be a metric for $N$. Suppose that $X_0$ is a normal complex subspace and $Y_0 \subset \tau^{-1}(0)$ is the strict transform of $X_0$. Taking a higher log resolution, we can assume that $Y_0$ is a smooth divisor. Viewing $X_0$ as an effective Cartier divisor, we have $\mu^*X_0 = Y_0 + \Theta$ with $\Theta \geq 0$ a $\mu$-exceptional divisor. In particular, $\Theta_0 \coloneqq\Theta|_{Y_0} \geq 0$. Restricting
\[
K_W+Y_0 + \Theta \sim K_W
\] to $Y_0$ and by the adjunction formula, we have
\begin{equation}\label{eq: K_W|Y bigger}
K_{Y_0}+\Theta_0 \sim K_W|_{Y_0}.
\end{equation} Therefore, a section of $\Oo(mK_W|_{Y_0})$ corresponds to a section of $\Oo(mK_{Y_0} + m\Theta_0)$, and its integrability with respect to the metric $\hbar_{m\Theta_0}$ makes sense (see Remark \ref{rmk: metric for linear equiv}). 

We explain the natural isomorphism
\[
\ti\nu: \Oo(K_{Y_0} + \Theta_0) \simeq \Oo(K_W|_{Y_0})
\] following \cite[Page 5-6]{Tak07}. The pull-back of the coordinate function $t$ on $\De$ is regarded as a holomorphic function on $W$ and it is still denoted by $t$. Suppose that $w_1, \ldots, w_{n-1}, y_0$ are local coordinates with $Y_0=\{y_0=0\}$. Then $t= \xi y_0$ with $\Theta= \{\xi=0\}$. For a sufficiently small open set $U \subset Y_0$, let 
\[
\tau \omega_{Y_0} \coloneqq \tau dw_1 \wedge \cdots \wedge dw_{n-1} \in H^0(U, \Oo(K_{Y_0}+\Theta_0)),
\] where $\tau \in H^0(U, \Oo(\Theta_0))$ is a section. Let $\ti\tau$ be a local meromorphic extension of $\tau$ near $U$ such that $\di(\ti\tau)+\Theta \geq 0$. Then $\ti\tau dw_1 \wedge \cdots \wedge dw_{n-1}$ is a local extension of $\tau\omega_{Y_0}$. By abuse of notation, $\omega_{Y_0}$ is still used to denote a local extension of $\omega_{Y_0}$ to $W$. In fact, in what follows, we always restrict the above forms to $Y_0$, hence the results are independent of the choice of extensions. 

Then 
\begin{equation}\label{eq:star}
\ti \nu:  \tau\omega_{Y_0} \mapsto (\ti\tau \omega_{Y_0} \wedge dt)|_{Y_0}.
\end{equation} Note that this is well-defined because $dt$ is independent of the choices of the open set $U$. In order to write $( \omega_{Y_0} \wedge dt)|_{Y_0}$ in terms of $(dw_1 \wedge \cdots \wedge dw_{n-1}\wedge dy_0)|_{Y_0}$, note that 
\[
dt= \xi dy_0 + y_0 d\xi
\] and $dw_1 \wedge \cdots \wedge dw_{n-1}\wedge d\xi=0$. Thus
\begin{equation}\label{eq: ti nu}
\ti \nu:  \tau \omega_{Y_0} \mapsto (\ti\tau\xi \omega_{Y_0}  \wedge dy_0)|_{Y_0}.
\end{equation} Because $\Theta=\{\xi=0\}$ and $\di(\ti\tau)+\Theta \geq 0$, $\ti\tau\xi$ is holomorphic. Hence $(\ti\tau\xi \omega_{Y_0}  \wedge dy_0)|_{Y_0} \in H^0(U, \Oo(K_W|_{Y_0}))$. From such local expressions, $\ti\nu$ is seen to be an isomorphism. Moreover, the above discussion also gives a natural isomorphism 
\begin{equation}\label{eq:double dagger}
\Oo(mK_{Y_0} +m \Theta_0) \xrightarrow{\sim} \Oo(mK_W|_{Y_0}).
\end{equation}

\begin{lemma}\label{le: extension}
Under the above notation and assumptions. If $\ti h_N|_{Y_0}$ is well-defined, then for each $m\in \Nn$, as long as $\ti h_N \hbar_{m\Theta}$ has non-negative curvature current, any section of
\[
H^0(Y_0, \Oo(mK_W|_{Y_0} + N|_{Y_0}) \otimes \Ii(\ti h_N|_{Y_0} \hbar_{m\Theta_0}))
\] extends over $W$.
\end{lemma}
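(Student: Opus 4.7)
The plan is to reduce the lemma to Takayama's extension theorem \cite[Theorem 3.1]{Tak07} by absorbing the auxiliary divisor $m\Theta$ and its associated metric $\hbar_{m\Theta}$ into the twisting line bundle. The curvature hypothesis of the lemma is arranged precisely so that the combined metric has non-negative curvature, which is what Takayama's theorem requires.

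First I would set $N' \coloneqq N + m\Theta$ and $\ti h_{N'} \coloneqq \ti h_N \cdot \hbar_{m\Theta}$. By assumption, $\ti h_{N'}$ is a metric for $N'$ with non-negative curvature current; its restriction $\ti h_{N'}|_{Y_0} = \ti h_N|_{Y_0} \cdot \hbar_{m\Theta_0}$ is well-defined because $\ti h_N|_{Y_0}$ is defined by hypothesis and $\hbar_{m\Theta}|_{Y_0} = \hbar_{m\Theta_0}$ by \eqref{eq:restriction} together with $Y_0 \not\subset \Supp \Theta$.

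Next, I would translate the given section into the twisted framework. Writing $L \coloneqq mK_W|_{Y_0} + N|_{Y_0}$, the inclusion $\Oo(L) \hookrightarrow \Oo(L + m\Theta_0)$ coming from $m\Theta_0 \geq 0$ regards $s$ as a section $\hat s$ of the larger sheaf. The hypothesis $s \in \Oo(L) \otimes \Ii(\ti h_N|_{Y_0}\hbar_{m\Theta_0})$ is a priori stronger than $s \in \Oo(L) \otimes \Ii(\ti h_N|_{Y_0})$ (since the multiplier ideal with the heavier weight is smaller); applying Proposition \ref{claim: same integration} with $E = m\Theta_0$ and $h = \ti h_N|_{Y_0}$ to the latter shows that it is equivalent to $\hat s \in \Oo(L + m\Theta_0) \otimes \Ii(\ti h_{N'}|_{Y_0})$, which is exactly the input that Takayama's theorem takes.

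I would then invoke \cite[Theorem 3.1]{Tak07} applied to $\tau : W \to \De$, the Cartier divisor $N'$, and the metric $\ti h_{N'}$ with non-negative curvature current and well-defined restriction to the smooth divisor $Y_0$. The theorem produces an extension $\ti s \in H^0(W, \Oo(mK_W + N'))$ with $\ti s|_{Y_0} = \hat s$, furnishing the desired extension of $s$ over $W$. The main obstacle is not the invocation of Takayama's theorem but the bookkeeping in the second step: one has to trace through local trivializations to confirm that the factor $|\xi|^{2m}$ introduced when passing from $\Oo(L)$ to $\Oo(L+m\Theta_0)$ exactly cancels the weight $|\xi|^{-2m}$ of $\hbar_{m\Theta_0}$, so that the multiplier-ideal correspondence of Proposition \ref{claim: same integration} is actually realized in our setting --- where the ideal sheaf $\Ii(\ti h_N|_{Y_0}\hbar_{m\Theta_0})$ is tensored with the ``smaller'' sheaf $\Oo(L)$ that does not itself contain $m\Theta_0$.
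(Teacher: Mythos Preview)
Your reduction hinges on invoking \cite[Theorem 3.1]{Tak07} with a twisting bundle $N'$ and singular metric $\ti h_{N'}$, but that theorem carries no twist: it is the untwisted statement that sections of $H^0(Y_0,\Oo(mK_W|_{Y_0}))$ extend over $W$ (this is exactly what Remark~\ref{rmk: generalize Tak07} records when it says the $N=0$ case of the lemma is \emph{weaker} than \cite[Theorem 3.1]{Tak07}). The content of Lemma~\ref{le: extension} is precisely to supply the twisted analogue in Takayama's non-smooth-fiber setting, so citing \cite[Theorem 3.1]{Tak07} with $N'$ is circular. Nor can you fall back on P\u aun's \cite[Theorem 1]{Pau07}: although that result is twisted, it requires the central fiber of $W\to\De$ to be smooth and irreducible, whereas here the fiber is $Y_0+\Theta$ and you are restricting to only one component. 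The paper's proof deals with this by re-running P\u aun's inductive Ohsawa--Takegoshi argument, replacing the OT step \cite[Theorem 2.1]{Pau07} by Takayama's \cite[Lemma 3.6]{Tak07}, which is designed for restriction to a component $Y_0$; the commutative diagram \eqref{eq: diag 1} then identifies the resulting extension with one in $H^0(W,\Oo(mK_W+N))$.

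There is a second, independent issue. Even granting a twisted black box, your packaging produces $\hat s\in H^0(Y_0,\Oo(mK_W|_{Y_0}+N'|_{Y_0})\otimes\Ii(\ti h_{N'}|_{Y_0}))$ and yields an extension in $H^0(W,\Oo(mK_W+N'))=H^0(W,\Oo(mK_W+N+m\Theta))$, not in $H^0(W,\Oo(mK_W+N))$. Knowing that $\hat s$ on $Y_0$ is divisible by the defining section of $m\Theta_0$ does not force its extension on $W$ to be divisible by that of $m\Theta$, so you cannot immediately descend. (Relatedly, if you instead tried to feed the lemma back into itself with the new twist $N'$, you would need $\hat s\in\Ii(\ti h_{N'}|_{Y_0}\hbar_{m\Theta_0})=\Ii(\ti h_N|_{Y_0}\hbar_{2m\Theta_0})$, a strictly stronger condition than the hypothesis.) The paper avoids both problems by working from the start with the intrinsic canonical $mK_{Y_0}$ and the single twist $N|_{Y_0}+m\Theta_0$, then using the isomorphism $\nu$ of \eqref{eq:double dagger} and the commutativity of \eqref{eq: diag 1}.
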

\begin{proof}
The argument is identically as that for \cite[Theorem 1]{Pau07} (a simplified and twisted version of \cite{Siu02}). Therefore, we just sketch the argument.

We show that the naturally defined morphisms $p,\nu,q$ (defined below) give the following commutative diagram
\begin{equation}\label{eq: diag 1}
\xymatrix{
&\Oo_W(mK_W+N) \ar[rd]^q \ar[ld]_p&\\
\Oo_{Y_0}(mK_{Y_0}+m\Theta_0+N|_{Y_0}) \ar[rr]^{\nu}& & \Oo_{Y_0}(mK_W|_{Y_0}+N|_{Y_0}).
}
\end{equation} 

It suffices to define the morphism when $N=0$. First, $p$ is defined through
\[
\Oo_W(mK_W) \xrightarrow{\sim} \Oo_W(mK_W+mY_0+m\Theta_0) \to \Oo_{Y_0}(mK_{Y_0}+m\Theta_0).
\] Suppose that locally $\Oo_{Y_0}(K_{Y_0})|_U=\omega_{Y_0}\Oo_{U}$, then the above is given in local expressions by
\[
\alpha (\omega_{Y_0}\wedge dy_0)^{\otimes m} \mapsto \frac{\alpha}{t^m}  (\omega_{Y_0}\wedge dy_0)^{\otimes m} \mapsto \frac{\alpha_0 y_0^m}{t^m} \omega_{Y_0}^{\otimes m},
\] where $\alpha$ is a local section of $\Oo_W$ and $\alpha_0 = \alpha|_{Y_0}$.

The $\nu$ is given as \eqref{eq:double dagger} and the $q$ is the natural restriction map. To show $q=\nu\circ p$, set $\xi_0 \coloneqq \xi|_{Y_0}$. As $t=\xi y_0$, we have
\[
\frac{\alpha_0 y_0^m}{t^m} \omega_{Y_0}^{\otimes m} = \frac{\alpha_0}{\xi_0^m} \omega_{Y_0}^{\otimes m}.
\] Thus
\[
p: \alpha(\omega_{Y_0}\wedge dy_0)^{\otimes m} \mapsto \frac{\alpha_0}{\xi_0^m}\omega_{Y_0}^{\otimes m}.
\] By $dt = \xi dy_0 + y_0 d\xi$, we have
\begin{equation}\label{eq: relation between forms}
\omega_{Y_0} \wedge dy_0 = \frac{1}{\xi} \omega_{Y_0} \wedge dt.
\end{equation} Thus 
\[
q: \alpha (\omega_{Y_0}\wedge dy_0)^{\otimes m} \mapsto \alpha_0 (\frac{1}{\xi_0} (\omega_{Y_0} \wedge dt)|_{Y_0})^{\otimes m}.
\] By \eqref{eq:star}, $\nu: \Oo(mK_{Y_0} + m\Theta_0) \xrightarrow{\sim} \Oo(mK_W|_{Y_0})$ is given by
\[
\nu: \tau_0 \omega_{Y_0}^{\otimes m} \mapsto \ti\tau_0(\omega_{Y_0} \wedge dt)^{\otimes m}|_{Y_0},
\] where $\tau_0 \in H^0(U, \Oo(m\Theta_0))$ and $\ti\tau_0$ is a meromorphic extension of $\tau_0$. Take $\tau_0 = \frac{\alpha_0}{\xi_0^m}$, we have $q=\nu \circ p$.

We claim that any section of 
\[
H^0(Y_0, \Oo(mK_{Y_0} + (N|_{Y_0}+m\Theta_0)) \otimes \Ii(\ti h_N|_{Y_0} h_{m\Theta_0}))
\] extends over $W$ (i.e. such section has preimage in $H^0(W, \Oo(mK_W+N))$). To show this, the argument of \cite[Theorem 1]{Pau07} goes through with the version of the Ohsawa-Takegoshi extension theorem in \cite[Theorem 2.1]{Pau07} replaced by \cite[Lemma 3.6]{Tak07}. 

Now, as $\nu$ induces the isomorphism
\[
\begin{split}
& H^0(Y_0, \Oo(mK_{Y_0} + (N|_{Y_0}+m\Theta_0)) \otimes \Ii(\ti h_N|_{Y_0} \hbar_{m\Theta_0}))\\
 \xrightarrow{\sim} &H^0(Y_0, \Oo(mK_W|_{Y_0} + (N|_{Y_0})) \otimes \Ii(\ti h_N|_{Y_0} \hbar_{m\Theta_0})),
\end{split}
\] the commutativity of the diagram \eqref{eq: diag 1} gives the desired result. 
\end{proof}

\begin{remark}\label{rmk: generalize Tak07}
When $N=0$, Lemma \ref{le: extension} is weaker than \cite[Theorem 3.1]{Tak07} because 
\[
H^0(Y_0, \Oo(mK_W|_{Y_0} ) \otimes \Ii(h_{m\Theta_0})) \subset H^0(Y_0, \Oo(mK_W|_{Y_0} )).
\]

It is likely that under the assumption of non-negative curvature current of $\ti h_N$, any section of
\[
H^0(Y_0, \Oo(mK_{Y_0} + m\Theta_0+N|_{Y_0}) \otimes \Ii(\ti h_N|_{Y_0}))
\] extends over $W$.
\end{remark}

\subsection{The construction of $\Gg_m(h)$}\label{subsec: Multiplier ideal sheaves and sections}

Let $X$ be a complex space with canonical singularities. Note that the canonical divisor $K_X$ is a Weil divisor which may not be Cartier. There exists $\ell \in \Nn$ such that $\ell K_X$ is Cartier. Let  $\mu: W \to X$ be a resolution. Let $w_i, i=1 \ldots, n$ be local coordinates on $W$ and $z_i, i=1 \ldots, n$ be local coordinates on $X_{\rm reg}$. There is a multiple-valued meromorphic function $J(\mu)$ such that
\[
(d\mu^*z_1 \wedge \cdots \wedge d\mu^*z_n)^{\otimes \ell} = J(\mu)^\ell (dw_1 \wedge \cdots \wedge dw_n)^{\otimes \ell}, 
\] where $J(\mu)^\ell$ is a (single-valued) meromorphic function. In what follows, over possibly singular locus, we are only concerned with $|J(\mu)|$, hence the multi-valuedness of $J(\mu)$ will not cause any problem.

If we write $E = \mu^*K_X-K_W$ with $E$ the $\mu$-exceptional $\Qq$-Cartier divisor, then $J(\mu)$ is the local equation of $-E$ (up to multiply a nowhere vanishing function). As $X$ has canonical singularities, $E \leq 0$ and $J(\mu)$ is a multiple-valued holomorphic function.

Let $L$ be a Cartier divisor. As $X$ has canonical singularities, for any $m\in\Nn$ ($mK_X$ may not be Cartier), there is a natural pull-back
\[
\mu^*\Oo_X(mK_X) \to \Oo_W(mK_W)
\] which induces the natural map
\[
\mu^*\Oo_X(mK_X+L) \to \Oo_W(mK_W+\mu^*L).
\] To be precise, suppose that on an open set $U \subset X_{\reg}$, we have 
\[
s\in H^0(U, \Oo_X(mK_X+L))
\] such that
\[
s = \alpha (dz_1 \wedge \cdots \wedge dz_n)^{\otimes m}
\] with $\alpha \in H^0(U, \Oo(L))$. Let
\[
d\mu^*z_1 \wedge \cdots \wedge d\mu^*z_n = J(\mu) dw_1 \wedge \cdots \wedge dw_n.
\] As $U$ is smooth, $J(\mu)$ is a single-valued holomorphic function on $U$. Then 
\begin{equation}\label{eq: explicit expression}
\mu^*s=\mu^*\alpha \cdot J(\mu)^m (dw_1 \wedge \cdots \wedge w_n)^{\otimes m},
\end{equation} where $\mu^*\alpha \cdot J(\mu)^m \in H^0(\mu^{-1}(U), \Oo_W(\mu^*L))$. As $E \leq 0$,
\[
\mu^*\Oo_X(mK_X+L) \hookrightarrow \Oo_W(mK_W+\mu^*L),
\] and thus $\mu^*\alpha \cdot J(\mu)^m (dw_1 \wedge \cdots \wedge w_n)^{\otimes m}$ extends over $W$.

Let $X$ be a normal complex space and $L$ be a Cartier divisor with a metric $h$. Assume that $\vphi$ is a local weight of $h$ under some trivialization. The following type of ideal sheaves generalizes multiplier ideal sheaves.

\begin{definition}[Definition of $\Gg_m(h)$]\label{def: G_m}
Under the above notation and assumptions. Let $\Gg_m(h)$ be the sheaf of germs of holomorphic functions such that for any $x\in X$,
\[
\begin{split}
\Gg_m(h)_x = \{&\alpha\in \Oo_{X,x} \mid |\mu^*\alpha|^2|J(\mu)|^{2m}e^{-2\mu^*\vphi} \in L_{\rm loc}^1(\mu^{-1}(U)), \\
& \text{where~}\mu: W \to X \text{~is a resolution and~} U \text{~is a neighborhood of~} x \}.
\end{split}
\]
\end{definition}

\begin{remark}\label{rem: global resolution}
(1) Because of \eqref{eq: pullback relation}, the above definition is independent of the choice of trivializations. However, $\alpha$ may depend on $\mu$. (2) For technical reasons (see Lemma \ref{lem: one resolution}), we require $\mu$ to be a (global) resolution of $X$ instead of a neighborhood of $x$.
\end{remark}

When $X$ is smooth, we certainly have $\Gg_m(h) \supset \Ii(h)$. Furthermore, if $m=1$, then $\Gg_1(h) = \Ii(h)$ by the change-of-variables formula (see \eqref{eq: integrability of pullback}). The following example shows that the inclusion $\Gg_m(h)\supset\Ii(h)$ may be strict.

\begin{example}\label{eg: not the same as multiplier ideal sheaf}
Let $x, y$ be the coordinates of $\Cc^2$. Let $h=e^{-\vphi}$ with $\vphi \coloneqq \frac 1 2 \log(|x|^4+|y|^4)$ be the metric for the trivial divisor. Then $1 \not\in \Ii(h)$. We claim that $1\in \Gg_2(h)$. In fact, let $\mu: W \to \Cc^2$ be the blow-up of the origin. Then $W=\{(x,y)\times[u:v] \in \Cc^2\times \Pp^1 \mid xv=yu\}$. It is covered by two pieces $U_1=\{(x,y,z) \in \Cc^2 \mid xz=y)\}$ and $U_2=\{(x,y,w) \in \Cc^2 \mid x=yw)\}$. By the symmetry of $\vphi$, it suffices to consider the $L_{\rm loc}^1$-property on one piece, say $U_1$. Choose $x, z$ as local coordinates on $U_1$. By $dy=xdz+zdx$, we have
\[
xdx\wedge dz =d\mu^*x \wedge d\mu^*y.
\] Thus $J(\mu)=x$. Then locally on $U_1$,
\[
\begin{split}
&\int 1 \cdot |J(\mu)|^4e^{-2\mu^*\vphi}~dV_{W}
\\
=&\int \frac{|x|^4}{|x|^4+|y|^4}~dV_{W}=\int \frac{1}{1+|z|^4}~dV_{W}<\infty.
\end{split}
\]
\end{example}

It is natural to ask the following question: 

\begin{question}
Is $\Gg_m(h)$ a coherent sheaf?
\end{question}

To study the above question, it seems that the $L^2$-extension theorem also needs to be generalized in the bimeromorphic setting.

\subsection{Proof of Theorem \ref{thm: sing extension}, Theorem \ref{thm: AG sing trivial boundary extension} and Corollary \ref{cor: sing invariant of plurigenera for g-pair}}

Let $\pi: X \to \De$ be a projective contraction from a $\Qq$-Gorenstein complex space $X$ to the disc $\De$. Let $L$ be a Cartier divisor on $X$. Assume that $X_0\subset X$ is the fiber over $0\in\De$, and it is a normal complex subspace. As $X_0$ is a Cartier divisor, by adjunction formula,
\[
(K_X+X_0)|_{X_0} \sim_\Qq K_{X_0}.
\] Thus $X_0$ is also $\Qq$-Gorenstein, and $(mK_X+L)|_{X_0} \sim_\Qq mK_{X_0}+L|_{X_0}$ as $X_0$ is linearly equivalent to $0$ on $X$. Note that we do not assume that $mK_{X_0}$ is Cartier. We explain the meaning of extending sections from $X_0$ to $X$.

\begin{lemma}\label{le: meaning of extension}
Under the above notation and assumptions, there exists a natural map
\[
H^0(X, \Oo(mK_X+L)) \to H^0(X_0, \Oo(mK_{X_0} + L|_{X_0})).
\]
\end{lemma}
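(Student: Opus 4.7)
The plan is to construct the restriction map as a composition of two natural operations: trivializing $\Oo_X(X_0)$ via the principal equivalence $X_0 \sim 0$, and then applying adjunction to restrict to $X_0$. I expect both operations to be defined canonically on the smooth locus and to extend to the normal complex space $X_0$ via reflexivization (the $S_2$ property of Weil-divisorial sheaves on normal spaces).

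First, I would observe that, because $\De$ is a disc with coordinate $t$, the pullback $\pi^*t \in \mathscr{M}_X(X)$ satisfies $\di(\pi^*t) = X_0$, so $X_0$ is a principal Cartier divisor. This yields a canonical isomorphism of reflexive $\Oo_X$-modules
\[
\Oo_X(mK_X + L) \xrightarrow{\sim} \Oo_X(m(K_X+X_0) + L),
\]
given at the level of (meromorphic) sections by $s \mapsto s/(\pi^*t)^m$; the assignment is well-defined on global sections because $\di(s/(\pi^*t)^m) + m(K_X+X_0)+L = \di(s) + mK_X + L \geq 0$.

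Next I would exploit that, since $X$ is $\Qq$-Gorenstein and $X_0$ is Cartier, $K_X + X_0$ is $\Qq$-Cartier, so the Cartier restriction $(K_X+X_0)|_{X_0}$ is a well-defined $\Qq$-Cartier divisor on $X_0$ that agrees with $K_{X_0}$ via the classical Poincaré residue over the dense open subset $X_{\rm reg}\cap X_0 \subset X_0$ where both $X$ and $X_0$ are smooth. Since the target $\Oo_{X_0}(mK_{X_0}+L|_{X_0})$ is reflexive on the normal complex space $X_0$, this local identification should upgrade by $S_2$ to a canonical isomorphism of reflexive $\Oo_{X_0}$-modules
\[
\bigl(\Oo_X(m(K_X+X_0)+L)|_{X_0}\bigr)^{**} \simeq \Oo_{X_0}(mK_{X_0}+L|_{X_0}).
\]
Composing the sheaf-theoretic restriction, the canonical map to the reflexive hull, and this isomorphism with the map from the first step will give the desired
\[
H^0(X, \Oo_X(mK_X+L)) \to H^0(X_0, \Oo_{X_0}(mK_{X_0}+L|_{X_0})).
\]

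As a sanity check that this agrees with the earlier Poincaré-residue description used in the paper (see \eqref{eq:star}), in local coordinates $w_1,\dots,w_{n-1},t$ at a smooth point of $X$ with $X_0=\{t=0\}$, a local section $s = f(dw_1 \wedge \cdots \wedge dw_{n-1} \wedge dt)^{\otimes m}\otimes \sigma$ of $\Oo(mK_X+L)$ is sent by the composite to $f|_{X_0}(dw_1 \wedge \cdots \wedge dw_{n-1})^{\otimes m}|_{X_0}\otimes \sigma|_{X_0}$, which is exactly the $m$-th iterate of the Poincaré residue. The main obstacle will be the adjunction isomorphism on reflexive sheaves when $X$ is only $\Qq$-Gorenstein (not smooth along $X_0$); this will reduce to checking that two reflexive rank-one sheaves on the normal complex space $X_0$ agree on the big open subset $X_{\rm reg}\cap X_0$, which follows from the standard smooth adjunction together with the reflexivity (S$_2$) of both sides.
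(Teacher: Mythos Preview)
Your proposal is correct and follows essentially the same approach as the paper: both use the principal equivalence $X_0\sim 0$, perform adjunction on the locus where $X$ and $X_0$ are smooth, and then extend over the codimension~$\geq 2$ complement via normality of $X_0$. The only cosmetic difference is that you phrase the last step in terms of reflexive hulls, whereas the paper works directly with $H^0$ on a smooth neighbourhood $V\supset (X_0)_{\rm reg}$ (whose existence uses that $X$ is automatically smooth at smooth points of the Cartier divisor $X_0$, which is the one small point you leave implicit).
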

\begin{proof}
Let $V_0 \subset X_0$ be the smooth locus of $X_0$. As $X_0$ is a Cartier divisor, there exists a neighborhood $V\supset V_0$ such that $V\subset X$ is a smooth open variety. Let $\ti V_0 = X_0 \cap {V}$, then $\ti V_0 \supset V_0$.

Let $j: \ti V_0 \to V$ be the closed embedding. Because $V$ is smooth, we have
\[
\Oo_V(mK_X+L) \to  j_*\Oo_{\ti V_0}(mK_X+L),
\] and $\Oo_{\ti V_0}(mK_X+L) \simeq \Oo(mK_{\ti V_0}+L|_{\ti V_0})$ as $\ti V_0$ is linearly equivalent to $0$ on $V$. As $\codim_{X_0}(X_0\backslash V_0) \geq 2$ and $\ti V_0 \supset V_0$, 
\[
H^0(\ti V_0, \Oo(mK_{\ti V_0}+L|_{\ti V_0})) \simeq H^0(X_0, \Oo(mK_{X_0}+L|_{X_0})).
\] Therefore, there exist natural maps
\[
\begin{split}
&H^0(X, \Oo(mK_X+L)) \to H^0(V, \Oo_V(mK_X+L))\\
\to &H^0(\ti V_0, \Oo(mK_{\ti V_0}+L|_{\ti V_0}))\to H^0(X_0, \Oo(mK_{X_0} + L|_{X_0})).
\end{split}
\]
\end{proof}

Assume that $X$ has canonical singularities. Let $\mu: W \to X$ be a log resolution of $(X, X_0)$. Let $Y_0$ be the strict transform of $X_0$. Suppose that $X_0$ has canonical singularities. Let $\mu_0 \coloneqq \mu|_{Y_0}: Y_0 \to X_0$ and $E_0 = \mu_0^*K_{X_0} - K_{Y_0}$. The following is the key extension lemma.

\begin{lemma}\label{le: extension for a fixed resolution}
Under the above notation and assumptions. Let $L$ be a Cartier divisor on $X$ with a non-negative metric $h$. Suppose that $h|_{X_0}$ is well-defined. Then for each $m\in \Nn$, any section $s \in H^0(X_0, \Oo_{X_0}(mK_{X_0}+L|_{X_0}))$ such that 
\[
\mu_0^*s \in H^0\left(Y_0, \Oo(mK_{Y_0}+ \mu_0^*(L|_{X_0})) \otimes \Ii(\mu_0^*(h|_{X_0}))\right)
\] extends over $X$ (i.e. $s$ has a preimage in $H^0(X, \Oo(mK_X+L))$ under the natural map in Lemma \ref{le: meaning of extension}).
\end{lemma}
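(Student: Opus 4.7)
\medskip
\noindent\textbf{Proof proposal.} The strategy is to lift the problem to the log resolution $W$, apply Lemma \ref{le: extension} there, and then descend the extension back to $X$ using canonical singularities.

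First, write $\mu^*X_0 = Y_0 + \Theta$ with $\Theta \geq 0$ a $\mu$-exceptional divisor, and set $\Theta_0 \coloneqq \Theta|_{Y_0}$. Put $N \coloneqq \mu^*L$ with pull-back metric $\ti h_N \coloneqq \mu^*h$. Since $h$ has non-negative curvature current and $\Theta \geq 0$, the metric $\ti h_N \hbar_{m\Theta}$ has non-negative curvature current, and $\ti h_N|_{Y_0} = \mu_0^*(h|_{X_0})$ is well-defined by hypothesis. Thus the assumptions of Lemma \ref{le: extension} are satisfied for the pair $(W, Y_0)$ with divisor $N$ and metric $\ti h_N$.

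Next I have to translate the integrability hypothesis on $\mu_0^*s$ into the form required by Lemma \ref{le: extension}. Using $X_0$ canonical, Section \ref{subsec: Multiplier ideal sheaves and sections} gives a natural map $\mu_0^*\Oo_{X_0}(mK_{X_0} + L|_{X_0}) \hookrightarrow \Oo_{Y_0}(mK_{Y_0} + \mu_0^*(L|_{X_0}))$, so $\mu_0^*s$ is a genuine section of the target. Via the isomorphism $\Oo(mK_{Y_0} + m\Theta_0) \xrightarrow{\sim} \Oo(mK_W|_{Y_0})$ from \eqref{eq:double dagger}, this yields a section of $\Oo(mK_W|_{Y_0} + N|_{Y_0})$. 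By Proposition \ref{claim: same integration} applied with $E = m\Theta_0$, the given membership
\[
\mu_0^*s \in H^0\bigl(Y_0, \Oo(mK_{Y_0} + \mu_0^*L|_{X_0}) \otimes \Ii(\ti h_N|_{Y_0})\bigr)
\]
is equivalent to
\[
\mu_0^*s \in H^0\bigl(Y_0, \Oo(mK_W|_{Y_0} + N|_{Y_0}) \otimes \Ii(\ti h_N|_{Y_0}\,\hbar_{m\Theta_0})\bigr).
\]
Applying Lemma \ref{le: extension} produces an extension $\ti s \in H^0(W, \Oo(mK_W + N)) = H^0(W, \Oo(mK_W + \mu^*L))$ whose restriction (in the sense of the commutative diagram \eqref{eq: diag 1}) recovers $\mu_0^*s$ via the canonical maps.

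Finally I descend $\ti s$ to $X$. Since $X$ has canonical singularities, writing $K_W = \mu^*K_X + E$ with $E \geq 0$ and $\mu$-exceptional, we have
\[
mK_W + \mu^*L = \mu^*(mK_X + L) + mE.
\]
For any meromorphic function $f$ on $W$ representing $\ti s$, the push-forward $\mu_*f$ satisfies $\di(\mu_*f) + (mK_X + L) = \mu_*\bigl(\di(f) + mK_W + \mu^*L\bigr) \geq 0$ because $\mu_*(mE) = 0$; hence $\mu_*\ti s \in H^0(X, \Oo_X(mK_X + L))$. It remains to check that the image of $\mu_*\ti s$ under the natural restriction of Lemma \ref{le: meaning of extension} equals $s$; this reduces, on the smooth locus $V \subset X$ meeting $X_0$ transversely, to the commutativity of \eqref{eq: diag 1} combined with the pull-back formula \eqref{eq: explicit expression} for sections over $X_0^{\mathrm{reg}}$, and then extends over $X_0$ by normality since $X_0 \setminus V_0$ has codimension $\geq 2$.

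The main subtlety I anticipate is the bookkeeping in the last step: matching the two natural restrictions, namely the one from $W$ to $Y_0$ used inside Lemma \ref{le: extension} and the one from $X$ to $X_0$ defined in Lemma \ref{le: meaning of extension}. Both are ultimately realized on the smooth loci via the isomorphism \eqref{eq:double dagger} and the pull-back \eqref{eq: explicit expression}, so tracing a local generator $\alpha (dz_1 \wedge \cdots \wedge dz_n)^{\otimes m}$ of $\Oo(mK_{X_0} + L|_{X_0})$ around the diagram should make the identification $\mu_*\ti s|_{X_0} = s$ transparent; everything else is formal application of the ingredients already assembled in the paper.
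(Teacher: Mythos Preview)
Your proposal is correct and follows essentially the same route as the paper: lift to $W$, use Proposition~\ref{claim: same integration} together with \eqref{eq: K_W|Y bigger} to feed $\mu_0^*s$ into Lemma~\ref{le: extension}, and then descend via canonical singularities. The only point to sharpen is the final compatibility check: diagram~\eqref{eq: diag 1} lives entirely on $W$ and $Y_0$ and does not by itself compare the two restrictions $W\to Y_0$ and $X\to X_0$; the paper instead writes down a new square
\[
\xymatrix{
H^0(W,\Oo(mK_W+\mu^*L)) \ar[r]^-{a} & H^0(Y_0,\Oo(mK_W|_{Y_0}+\mu_0^*L_0)) \\
H^0(X,\Oo(mK_X+L)) \ar[u]^{\simeq}_{b} \ar[r]^-{d} & H^0(X_0,\Oo(mK_{X_0}+L_0)) \ar@{^{(}->}[u]_{c}
}
\]
and verifies its commutativity by exactly the local-generator trace you anticipate in your last paragraph; your push-forward $\mu_*\ti s$ is the inverse of the isomorphism $b$.
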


\begin{proof}
Let $L_0 = L|_{X_0}, h_0 = h|_{X_0}$. By assumption (see Definition \ref{def: non-negative curvature on variety}), $\mu^*h$ has non-negative curvature current. Let $\mu^*X_0=Y_0+\Theta$ and $\Theta_0 = \Theta|_{Y_0}$. Then $\mu^*h \hbar_{m\Theta}$ has non-negative curvature current by  $\Theta \geq 0$.

By Proposition \ref{claim: same integration} and \eqref{eq: K_W|Y bigger},
\[
\begin{split}
& H^0(Y_0, \Oo(mK_{Y_0} + \mu_0^*L_{0}) \otimes \Ii(\mu_0^*h_0 ))\\
\subset  & H^0(Y_0, \Oo(mK_{Y_0} +m\Theta_0 + \mu_0^*L_{0}) \otimes \Ii(\mu_0^*h_0 \hbar_{m\Theta_0}))\\
\simeq & H^0(Y_0, \Oo(mK_W|_{Y_0}  + \mu_0^*L_{0}) \otimes \Ii(\mu_0^*h_0 \hbar_{m\Theta_0})).
\end{split}
\] By Lemma \ref{le: extension}, $\mu_0^*s$ extends over $W$. That is, there exists $\ti \omega \in H^0(W, \Oo(mK_W+\mu^*L))$ such that $\ti \omega|_{Y_0} = \mu_0^*s$. 

We have the following diagram
\begin{equation}\label{eq: diag2}
\xymatrix{
H^0(W, \Oo(mK_W+\mu^*L))  \ar[r]^a & H^0(Y_0, \Oo(mK_W|_{Y_0}+\mu_0^*L_0)) \\
H^0(X, \Oo(mK_X+L))  \ar[u]_\simeq^b \ar[r]^d  & H^0(X_0, \Oo(mK_{X_0}+L_0)), \ar@{^{(}->}[u]_c }
\end{equation}  where $a$ is the restriction map, $d$ comes from Lemma \ref{le: meaning of extension}, $b$ comes from that $X$ has canonical singularities, and $c$ is an inclusion which comes from \eqref{eq: K_W|Y bigger} and $X_0$ has canonical singularities. The desired extension follows from the diagram chasing once we know that the diagram is commutative. 

To check the commutativity of the diagram, we follow the notation in the discussion before Lemma \ref{le: extension}.
Besides, it is enough to work locally in the smooth loci of $X_0, Y_0$ and $W$. Then
\[
b: \Oo_X(mK_X+L) \to \Oo_W(m\mu^*K_X+\mu^*L)\to \Oo(mK_W+\mu^*L)
\] is given by
\[
\beta (\omega_{X_0} \wedge dt)^{\otimes m} \mapsto \mu^*\beta (\mu^*\omega_{X_0} \wedge dt)^{\otimes m} \mapsto \mu^*\beta (J(\mu_0)\omega_{Y_0} \wedge dt)^{\otimes m},
\] where $\beta$ is a local section of $L$, $\omega_{X_0}$ and $\omega_{Y_0}$ are local generators of $\Oo(K_{X_0})$ and $\Oo(K_{Y_0})$ respectively, and $\mu^*\omega_{X_0} = J(\mu_0) \omega_{Y_0}$. Note that the $t$ in each term is the corresponding pull-back of the coordinate $t$ on $\De$. 

The map
\[
\begin{split}
c: \Oo(mK_{X_0}+L_0) \to \Oo(m\mu_0^*K_{X_0} +\mu_0^*L_0) &\hookrightarrow \Oo(mK_{Y_0}  +\mu_0^*L_0) \\
&\to \Oo(mK_W|_{Y_0}+\mu_0^*L_0)
\end{split}
\] is given by
\[
\beta_0\omega_{X_0}^{\otimes m} \mapsto \mu_0^*\beta_0 (\mu_0^*\omega_{X_0})^{\otimes m} \mapsto \mu_0^*\beta_0 (J(\mu_0)\omega_{Y_0})^{\otimes m} \mapsto  \mu_0^*\beta_0 J(\mu_0)^m (\omega_{Y_0}\wedge dt|_{Y_0})^{\otimes m},
\] where $\beta_0$ is a local section of $L_0$ and $\mu_0^*\omega_{X_0}=J(\mu_0) \omega_{Y_0}$. The last map is \eqref{eq:double dagger}.

For the same local section $\beta$ of $L$, the map $a$ is given by
\[
a: \mu^*\beta (\omega_{Y_0} \wedge dt)^{\otimes m}\mapsto 
(\mu^*\beta|_{Y_0})(\omega_{Y_0}\wedge dt|_{Y_0})^{\otimes m}.
\] (Strictly speaking, locally around $Y_0$, a section of $\Oo(mK_W+\mu^*L)$ should be represented by $\mu^*\beta (\omega_{Y_0} \wedge dy_0)^{\otimes m}$. Note $\omega_{Y_0} \wedge dy_0 = \frac{1}{\xi} \omega_{Y_0} \wedge dt$ by \eqref{eq: relation between forms}.)

Finally, the map $d$ is given by
\[
d: \beta (\omega_{X_0} \wedge dt)^{\otimes m} \mapsto \beta|_{Y_0} \omega_{X_0}^{\otimes m}.
\]

Thus the diagram \eqref{eq: diag2} commutes.
\end{proof}

\begin{remark}\label{rmk: above lemma is enough}
Lemma \ref{le: extension for a fixed resolution} is enough to show the desired extension result Theorem \ref{thm: AG sing trivial boundary extension}. The reason is that for any section of $H^0(X_0, \Oo_{X_0}(m(K_{X_0}+ M|_{X_0})))$, the integrability requirement is always satisfied when taking the resolution $W=X'$ (see \eqref{eq: integrable}, the key is that after taking a resolution, we have an extra weight $\vphi_{-mB_0}$).
\end{remark}

\begin{lemma}\label{le: comparing sections}
Let $X$ be a $\Qq$-Gorenstein complex space. Let $g: V \to X$ and $u: W  \to V$  be resolutions. Set $f=g\circ u: W \to X$. Let $L$ be a Cartier divisor on $X$ with a metric $h$. For any $m\in \Nn$ and $s \in H^0(X, \Oo(mK_X+L))$, if $g^*s \in H^0(V, \Oo(mK_V+g^*L) \otimes \Ii(g^*h))$, then $f^*s \in H^0(W, \Oo(mK_W+f^*L) \otimes \Ii(f^*h ))$. 
\end{lemma}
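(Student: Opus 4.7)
The plan is to reduce the claim to a local $L^2$ estimate and exploit that both $V$ and $W$ are smooth (being resolutions), so that $u: W\to V$ is a birational morphism between smooth complex spaces whose Jacobian is an honest holomorphic function. Fix a point $w\in W$, set $v = u(w)$, choose local coordinates $v_1,\ldots,v_n$ near $v$ and $w_1,\ldots,w_n$ near $w$, and a trivialization of $\Oo(L)$ near $g(v)$ giving local weight $\vphi$ for $h$. Writing
\[
du^*v_1\wedge \cdots\wedge du^*v_n = J(u)\,dw_1\wedge\cdots\wedge dw_n,
\]
we have $J(u)\in\Oo(W)$. If $g^*s$ is locally $\gamma\cdot(dv_1\wedge\cdots\wedge dv_n)^{\otimes m}$ with $\gamma$ a local section of $g^*L$, then
\[
f^*s = u^*\gamma\cdot J(u)^m\,(dw_1\wedge\cdots\wedge dw_n)^{\otimes m}.
\]
Thus $f^*s\in \Oo(mK_W+f^*L)\otimes\Ii(f^*h)$ is equivalent to local integrability of $|u^*\gamma|^{2}|J(u)|^{2m}e^{-2f^*\vphi}$ on $W$, while by hypothesis $|\gamma|^{2}e^{-2g^*\vphi}$ is locally integrable on $V$.

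The key step uses $m\geq 1$. Factor
\[
|J(u)|^{2m} = |J(u)|^{2(m-1)}\cdot |J(u)|^{2},
\]
and observe that $|J(u)|^{2(m-1)}$ is continuous because $J(u)$ is holomorphic; hence it is bounded by some constant $C$ on a relatively compact neighborhood $\Omega\ni w$ chosen with $u(\Omega)\subset U$, where $U\ni v$ is an open set on which $|\gamma|^{2}e^{-2g^*\vphi}$ is integrable. Therefore
\[
\int_\Omega |u^*\gamma|^{2}|J(u)|^{2m}e^{-2f^*\vphi}\,dV_W \;\leq\; C\int_\Omega |u^*\gamma|^{2}|J(u)|^{2}e^{-2f^*\vphi}\,dV_W.
\]
Next I would apply the change-of-variables formula for the proper birational morphism $u$ between smooth manifolds, which is valid since $u$ is a biholomorphism off a measure-zero exceptional set and satisfies $u^*dV_V = |J(u)|^2\,dV_W$. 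This gives
\[
\int_\Omega |u^*\gamma|^{2}|J(u)|^{2}e^{-2f^*\vphi}\,dV_W \;=\; \int_{u(\Omega)} |\gamma|^{2}e^{-2g^*\vphi}\,dV_V \;\leq\; \int_U |\gamma|^{2}e^{-2g^*\vphi}\,dV_V \;<\;\infty,
\]
yielding the required local integrability on $W$.

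There is no substantial obstacle in this argument. The only thing to be careful about is how $f^*s$ is expressed through $J(u)^m$ while the change-of-variables formula produces just one factor of $|J(u)|^{2}$; the excess factor $|J(u)|^{2(m-1)}$ is harmless precisely because $V$ and $W$ are both smooth (so $J(u)$ is holomorphic, not merely meromorphic) and $m\geq 1$ (so the extra exponent is nonnegative). Once this local description is in place, the lemma reduces to standard change-of-variables considerations.
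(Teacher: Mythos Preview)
Your proposal is correct and follows essentially the same route as the paper: write $g^*s$ locally as $\gamma\,(dv_1\wedge\cdots\wedge dv_n)^{\otimes m}$, pull back to get the factor $J(u)^m$, use that $J(u)$ is holomorphic (since $V,W$ are smooth) together with $m\ge 1$ to absorb $|J(u)|^{2(m-1)}$, and finish with the change-of-variables formula $u^*dV_V=|J(u)|^2\,dV_W$. The paper states the last step more tersely (``as $m\ge 1$ and $J(u)$ is holomorphic, the claim follows''), while you spell out the bounding of $|J(u)|^{2(m-1)}$ on a relatively compact set; the arguments are otherwise identical.
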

\begin{proof}
By $V$ smooth and $g^*s \in H^0(V, \Oo(mK_V+g^*L))$, we have $f^*s \in H^0(W, \Oo(mK_W+f^*L))$. Hence we only need to check the integrability. 

Let $v_i, i=1, \ldots, n$ and $w_i, i=1, \ldots, n$ be local coordinates of $V$ and $W$ respectively. Assume that $g^*s = \sigma (dv_1 \wedge \cdots \wedge dv_n)^{\otimes m}$ with $\sigma$ a local section of $g^*L$, then locally we have
\[
\int \|\sigma\|^2_{g^*h} ~dV_{V} < \infty.
\] By the change-of-variables formula, this is the same as
\begin{equation}\label{eq: integrability of pullback}
\int \|u^*\sigma\|^2_{u^*g^*h} |J(u)|^2 ~dV_{W} < \infty,
\end{equation} where $J(u)$ is the local equation of the $u$-exceptional divisor $K_{W}-u^*K_{V}$.

On the other hand, 
\[
f^*s = u^*(\sigma (dv_1 \wedge \cdots \wedge dv_n)^{\otimes m})=(u^*\sigma) J(u)^{2m} (dw_1 \wedge \cdots \wedge dw_n)^{\otimes m}.
\] Thus $f^*s \in H^0(W, \Oo(mK_W+f^*L) \otimes \Ii(f^*h ))$
means that locally 
\[
\int \|u^*\sigma\|^2_{u^*g^*h} |J(u)|^{2m} ~dV_{W} < \infty.
\] As $m \geq 1$ and $J(u)$ is holomorphic, the claim follows.
\end{proof}

\begin{lemma}\label{lem: one resolution}
Let $X$ be a compact complex space with canonical singularities. Let $L$ be a Cartier divisor on $X$ with a metric $h$. For some $m\in\Nn$, $s\in H^0(X, \Oo_X(mK_X+L)\otimes \Gg_m(h))$ if and only if there exists a resolution $\mu: W \to X$ such that
\[
\mu^*s \in H^0(W, \Oo_W(mK_W+\mu^*L)\otimes\Ii(\mu^*h)).
\]
\end{lemma}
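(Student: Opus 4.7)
The plan is to handle the two directions separately; the direction $(\Leftarrow)$ is essentially an unpacking of the definitions, while $(\Rightarrow)$ requires compactness together with a common dominating resolution.

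For $(\Leftarrow)$, suppose $\mu^*s\in H^0(W,\Oo_W(mK_W+\mu^*L)\otimes\Ii(\mu^*h))$ for a resolution $\mu:W\to X$. Working locally on the smooth locus of $X$, write $s=\alpha(dz_1\wedge\cdots\wedge dz_n)^{\otimes m}$ in a trivialization, so that by \eqref{eq: explicit expression} one has $\mu^*s=\mu^*\alpha\cdot J(\mu)^m(dw_1\wedge\cdots\wedge dw_n)^{\otimes m}$. The hypothesis then reads exactly
\[
|\mu^*\alpha|^2|J(\mu)|^{2m}e^{-2\mu^*\vphi}\in L^1_{\rm loc}(\mu^{-1}(U))
\]
for each small open $U\subset X$, which is precisely the defining condition of $\Gg_m(h)_x$ at every $x\in U$ (Definition \ref{def: G_m}). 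Hence $s\in H^0(X,\Oo_X(mK_X+L)\otimes\Gg_m(h))$.

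For $(\Rightarrow)$, by Definition \ref{def: G_m}, for each $x\in X$ there exists a global resolution $\mu_x:W_x\to X$ and an open neighborhood $U_x\ni x$ such that $\mu_x^*s$ satisfies the required $L^1_{\rm loc}$-condition on $\mu_x^{-1}(U_x)$ (this is the content of Remark \ref{rem: global resolution}, namely that ``$\alpha$ may depend on $\mu$''). Compactness of $X$ yields a finite subcover $\{U_i\}_{i=1}^k$ together with associated resolutions $\mu_i:W_i\to X$. I would then construct a single dominating resolution $\mu:W\to X$ by taking the main component of the iterated fibre product $W_1\times_X\cdots\times_X W_k$, applying Hironaka's Chow lemma \cite[Chapter VII, Theorem 2.8]{Pet94b} to make it projective over $X$, and desingularizing, thereby producing morphisms $u_i:W\to W_i$ with $\mu_i\circ u_i=\mu$ for each $i$.

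The final step propagates the integrability from each $\mu_i$ to the common $\mu$. The argument in the proof of Lemma \ref{le: comparing sections} is purely local: via the change-of-variables formula \eqref{eq: integrability of pullback}, the transition from $\mu_i$ to $\mu=\mu_i\circ u_i$ introduces only an extra holomorphic factor $|J(u_i)|^{2m}$, which is locally bounded once $m\geq 1$. Hence $\mu_i^*s\in\Ii(\mu_i^*h)$ on $\mu_i^{-1}(U_i)$ implies $\mu^*s=u_i^*(\mu_i^*s)\in\Ii(\mu^*h)$ on $\mu^{-1}(U_i)=u_i^{-1}(\mu_i^{-1}(U_i))$. Since $\{\mu^{-1}(U_i)\}_{i=1}^k$ covers $W$, I conclude $\mu^*s\in H^0(W,\Oo_W(mK_W+\mu^*L)\otimes\Ii(\mu^*h))$. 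The main obstacle will be the construction of the dominating global resolution in the analytic category, which rests on Hironaka's Chow lemma together with resolution of singularities for complex spaces; everything else is routine bookkeeping combining the finite cover from compactness with the local integrability computation already carried out in the proof of Lemma \ref{le: comparing sections}.
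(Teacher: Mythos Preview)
Your proposal is correct and follows essentially the same route as the paper: the paper dispatches $(\Leftarrow)$ with the single remark ``the sufficient part follows from the definition'', then for $(\Rightarrow)$ uses compactness to extract finitely many open sets $U_j$ with projective resolutions $\mu_j:W_j\to X$ on which the $L^1_{\rm loc}$-condition holds, takes a single resolution $\mu:W\to X$ factoring through every $\mu_j$, and invokes Lemma~\ref{le: comparing sections} to propagate the integrability to $W$ over each $\mu^{-1}(U_j)$. Your only addition is an explicit recipe for the dominating resolution via the main component of the iterated fibre product followed by desingularization (Chow's lemma is not strictly needed here, since properness is automatic and a resolution suffices).
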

\begin{proof}
The sufficient part follows from the definition. In the following, we show the necessary part.

Let $\vphi$ be the local weight for $h$. By $X$ compact, there are open sets $U_j, 1 \leq j \leq k$ and projective resolutions $\mu_j: W_j \to X, 1 \leq j \leq k$ such that $s\in H^0(U_j, \Oo_X(mK_X+L))$ and
\begin{equation}\label{eq: loc L1}
|\mu^*_j\alpha|^2|J(\mu_j)|^{2m}e^{-2\mu_j^*\vphi} \in L^1_{\rm loc}(\mu_j^{-1}(U_j)),
\end{equation}
where $s=\alpha\cdot(\omega_X)^{\otimes m}$ with $\omega_X$ a local generator for $\Oo(K_X)$ and $\alpha$ a local section of $L$ (see Remark \ref{rem: global resolution} (2)). More precisely, we have
\[
\mu_j^*s=(\mu_j^*\alpha)\cdot J(\mu_j)^m(\omega_{W_j})^{\otimes m},
\] where $(\mu_j^*\alpha)\cdot J(\mu_j)^m$ is first defined on the smooth locus, but it extends as a local section of $\mu_j^*L$ on $W_j$ as $X$ has canonical singularities. 

Hence, \eqref{eq: loc L1} is the same as
\begin{equation}\label{eq: higher multiplier ideal space}
\mu_j^*s \in H^0(\mu^{-1}_j(U_j), \Oo_{W_j}(mK_{W_j}+\mu_j^*L)\otimes \Ii(\mu_j^*h)).
\end{equation}

Let $\mu: W \to X$ be a resolution such that $\mu$ factors through each $\mu_j, 1 \leq j \leq k$. We claim that $\mu$ satisfies the desired property. By construction, the natural morphism $\nu_j: \mu^{-1}(U_j) \to \mu^{-1}_j(U_j)$ is a resolution. By \eqref{eq: higher multiplier ideal space} and Lemma \ref{le: comparing sections},
\[
\nu_j^*(\mu_j^*s) \in H^0(\mu^{-1}(U_j), \Oo_{W}(mK_W+\mu^*L)\otimes\Ii(\mu^*h)).
\] As $\nu_j^*(\mu_j^*s) = (\mu^*s)|_{\mu^{-1}(U_j) }$, the claim follows.
\end{proof}

\begin{lemma}\label{le: extend birational morphism}
Let $X$ be a complex space, and $X_0 \subset X$ be a reduced irreducible compact complex subspace of codimension $1$. Suppose that $\nu_0: \ti X_0 \to X_0$ is a proper modification. Then there exist smooth complex spaces $Y_0\subset Y$ and resolutions $\mu: Y \to X, \tau_0: Y_0 \to \ti X_0$ such that $\mu|_{Y_0}=\nu_0\circ\tau_0$.
\end{lemma}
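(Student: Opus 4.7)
My plan is to first take a log resolution of $(X, X_0)$ so that the strict transform of $X_0$ becomes a smooth divisor, then extend the bimeromorphic modification $\nu_0$ to a modification of the entire resolution by blowing up a coherent ideal sheaf supported on this strict transform, and finally perform one more log resolution to make everything smooth.

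Concretely, I would first apply the log resolution theorem cited in Section \ref{sec: 2} to obtain a projective log resolution $\mu_1 \colon W \to X$ of the pair $(X, X_0)$, so that $W$ is smooth and the strict transform $W_0 \subset W$ of $X_0$ is a smooth Cartier divisor; the restriction $\mu_1|_{W_0} \colon W_0 \to X_0$ is then a proper modification. Combining this with $\nu_0$ yields a bimeromorphic map $g \coloneqq \nu_0^{-1} \circ (\mu_1|_{W_0}) \colon W_0 \dto \ti X_0$. Let $\Gamma \subset W_0 \times \ti X_0$ denote the closure of the graph of $g$; its two projections $p_1 \colon \Gamma \to W_0$ and $p_2 \colon \Gamma \to \ti X_0$ are both proper modifications, and by construction $\mu_1|_{W_0} \circ p_1 = \nu_0 \circ p_2$.

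The crucial step is to extend $p_1$ to a proper modification of $W$. Invoking Rossi's theorem that every proper modification of a reduced complex space is dominated by the blow-up of a coherent ideal sheaf, and replacing $\Gamma$ by such a dominating blow-up (composing $p_2$ with the dominating map accordingly), one may assume $\Gamma = \mathrm{Bl}_{\mathcal{J}_0}(W_0)$ for a coherent ideal sheaf $\mathcal{J}_0 \subset \Oo_{W_0}$. Let $\widetilde{\mathcal{J}} \subset \Oo_W$ be the preimage of $\mathcal{J}_0$ under the quotient $\Oo_W \twoheadrightarrow \Oo_{W_0}$; then $\widetilde{\mathcal{J}}$ is coherent, restricts to $\mathcal{J}_0$ on $W_0$, and equals $\Oo_W$ on $W \setminus W_0$. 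Setting $p \colon Y_1 \coloneqq \mathrm{Bl}_{\widetilde{\mathcal{J}}}(W) \to W$, the morphism $p$ is an isomorphism away from $W_0$, hence a proper modification since $W_0$ is compact; and because $W_0$ is not contained in the zero locus of $\widetilde{\mathcal{J}}$, the strict transform of $W_0$ in $Y_1$ is precisely $\mathrm{Bl}_{\mathcal{J}_0}(W_0) = \Gamma$.

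Finally I would apply the log resolution theorem to $(Y_1, \Gamma)$ to obtain a projective log resolution $q \colon Y \to Y_1$ with $Y$ smooth and the strict transform $Y_0 \subset Y$ of $\Gamma$ smooth. Setting $\mu \coloneqq \mu_1 \circ p \circ q$ and $\tau_0 \coloneqq p_2 \circ (q|_{Y_0})$, both are compositions of proper modifications with smooth targets, hence resolutions; and the compatibility $\mu|_{Y_0} = \mu_1|_{W_0} \circ p_1 \circ (q|_{Y_0}) = \nu_0 \circ p_2 \circ (q|_{Y_0}) = \nu_0 \circ \tau_0$ follows from the identity established in the second paragraph. The main obstacle is the extension step: it relies on Rossi's theorem in the analytic setting, together with the verification that the preimage ideal $\widetilde{\mathcal{J}}$ indeed recovers $\Gamma$ as the strict transform of $W_0$ under the blow-up.
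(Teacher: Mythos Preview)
Your proof is correct and uses the same core strategy as the paper: dominate the modification by the blow-up of a coherent ideal (what you call Rossi's theorem is cited in the paper as Hironaka's Chow lemma), lift the ideal to the ambient space via the kernel construction $\ker(\Oo \to \Oo/\mathcal{J}_0)$, blow up, and then take a log resolution. The paper is slightly more streamlined in that it works directly on $X$ and $X_0$ --- applying Chow's lemma to $\nu_0$ to obtain $\mathfrak{I}_0 \subset \Oo_{X_0}$, lifting to $\mathfrak{I} = \ker(\Oo_X \to \Oo_{X_0}/\mathfrak{I}_0)$, blowing up to get $X'$ with $\ti X_0 \subset X_0 \times_X X' \subset X'$, and taking one final log resolution of $(X', \ti X_0)$ --- so your preliminary log resolution of $(X, X_0)$ and the graph construction over $W_0$ are unnecessary detours, though harmless.
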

\begin{proof}
By Hironaka's Chow lemma (see \cite[Chapter VII, Theorem 2.8, Corollary 2.9]{Pet94b}), there exist a blow-up $\sigma_0: Z_0 \to X_0$ of an ideal sheaf $\mathfrak I_0$ on $X_0$ and a proper modification $\mu_0: Z_0 \to \ti X_0$ such that $\sigma_0 = \nu_0\circ\mu_0$. Replacing $\nu_0$ by $\sigma_0$, we can assume that $\nu_0$ is the blow-up of the ideal sheaf $\mathfrak I_0$ on $X_0$. Let $\mathfrak I$ be the kernel of $\Oo_X \to \Oo_{X_0}/\mathfrak I_0$. Let $X'= \proj_X \oplus_{i=0}^\infty \mathfrak I^i \to X$ be the blow-up of $\mathfrak I$. Then there exist embeddings $\ti X_0\subset X_0 \times_{X} X' \subset X'$. Let $Y \to X'$ be a log resolution of $(X', \ti X_0)$ with $Y_0$ the strict transform of $\ti X_0$. Then the corresponding morphisms satisfy the claim.
\end{proof}

\begin{proof}[Proof of Theorem \ref{thm: sing extension}]
By Lemma \ref{lem: one resolution}, for a fixed $s\in H^0(X_0, \Oo_{X_0}(mK_{X_0}+L|_{X_0}) \otimes \Gg_m(h|_{X_0}))$, there is a resolution $\nu_0: \ti X_0 \to X_0$ such that $\nu_0^*s \in H^0(\ti X_0, \Oo_{\ti X_0}(mK_{\ti X_0}+\nu_0^*(L|_{X_0})) \otimes \Ii(\nu_0^*(h|_{X_0})))$. By Lemma \ref{le: extend birational morphism}, there are smooth complex spaces $Y_0 \subset Y$ and resolutions $\mu: Y \to X, \tau_0: Y_0 \to \ti X_0$ such that $\mu_0\coloneqq\mu|_{Y_0}=v_0\circ\tau_0$. By Lemma \ref{le: comparing sections}, 
\[
\mu_0^*s \in H^0(Y_0, \Oo_{Y_0}(mK_{Y_0}+\mu_0^*(L|_{X_0})) \otimes \Ii(\mu_0^*(h|_{X_0}))).
\] Then the claim follows from Lemma \ref{le: extension for a fixed resolution}.
\end{proof}

\begin{remark}
\cite[Theorem 1]{Tak07} does not assume that $X_0$ has canonical singularities. This is because the $m$-genus in \cite{Tak07} is defined by using its smooth model which does not coincide with our definition (take $L=0$) when the singularities are worse than the canonical singularities.
\end{remark}

Using this result, we show the extension theorem for g-pairs with abundant nef part. 

\begin{remark}\label{rmk: K_X not Cartier}
In Theorem \ref{thm: AG sing trivial boundary extension}, $mK_{X_0}$ is not assumed to be Cartier. Instead, it is just a Weil divisor on $X_0$. A priori, $K_{X_0}$ could be even not $\Qq$-Cartier. But if there exists $m$ such that $mM|_{X_0}$ is Cartier, then $K_{X_0}$ is $\Qq$-Cartier as $K_{X_0}+M_0$ is $\Qq$-Cartier (this is included in the definition of g-canonical singularities).
\end{remark}

\begin{proof}[Proof of Theorem \ref{thm: AG sing trivial boundary extension}]
As $\pi_*\Oo_X(mK_X+mM)$ is a coherent sheaf on $\De$, by Cartan's Theorem A, it suffices to show the extension after shrinking $\De$. Hence, by \cite[Main Theorem]{Kaw99}, we can assume that $X$ has canonical singularities.

Let $L=mM$ and $M'$ be the nef$/\De$ $\Qq$-divisor on $X'$ such that $f_*M'=M$ and satisfying Definition \ref{def: abundant gpair}. To apply Theorem \ref{thm: sing extension}, it suffices to construct a metric $h$ for $L$ satisfying conditions of Theorem \ref{thm: sing extension} and show
\[
H^0(X_0, \Oo_{X_0}(m(K_{X_0}+ M|_{X_0})))=H^0(X_0, \Oo_{X_0}(mK_{X_0}+L|_{X_0}) \otimes \Gg_m(h|_{X_0})).
\] 

By the adjunction formula (see \eqref{eq: adjunction on X_0}), we have a g-lc pair $(X_0, B_0+M_0)$ such that $K_{X_0}+B_0+M_0 = K_{X_0}+M|_{X_0}$. As $(X_0, B_0+M_0)$ has g-canonical singularities, $B_0=0$ and thus $M_0 = M|_{X_0}$. Then
\[
K_{\ti X_0}+D_0+M'_0 =f_0^*(K_{X_0}+M_0),
\] where $M_0'\coloneqq M'|_{\ti X_0}$ and $D_0 \leq 0$ is an $f_0$-exceptional divisor. Besides, 
\[
K_{\ti X_0}+B_0 =f_0^*K_{X_0},
\] where $B_0$ is an $f_0$-exceptional divisor. Because $(X_0, M_0)$ has g-canonical singularities and $K_{X_0}$ is $\Qq$-Cartier, $X_0$ also has canonical singularities, and thus $B_0 \leq 0$. By Lemma \ref{le: negativity}, $M'+\Upxi=f^*M$ with $\Upxi \geq 0$. We have 
\[
M_0'+\Upxi_{0}= f_0^*(M_0),
\] where $\Upxi_{0} \coloneqq \Upxi|_{X_0}$. Combining the above equations, we have 
\begin{equation}\label{eq: sing D =B+Upxi}
D_0=B_0+\Upxi_0.
\end{equation}

Shrinking $\De$ further, let $h_k$ be a metric for $M$ as in Lemma \ref{le: comparing h with h_k'} and set $h_{k,0}=h_k|_{X_0}$. For a fixed $m \in \Nn$, we claim that there exists $k \gg 1$ such that
\[
H^0(X_0, \Oo_{X_0}(m(K_{X_0}+ M|_{X_0}))) = H^0(X_0, \Oo_{X_0}(m(K_{X_0}+ M|_{X_0}))\otimes \Gg_m(h_{k,0}^{m})).
\] 
Then the theorem follows from Theorem \ref{thm: sing extension}.

For a section $s \in H^0(X_0, \Oo_{X_0}(m(K_{X_0}+ M|_{X_0})))$, assume that $\theta: \Oo(L)|_U \simeq \Oo_U$ is a trivialization and on $U_{\rm reg}$, $s$ can be written as $\alpha(dx_1 \wedge \cdots \wedge d x_{n-1})^{\otimes m}$. By Lemma \ref{lem: one resolution}, it suffices to show
\begin{equation}\label{eq: sing bounded}
\int \|f_0^*\alpha\|^2_{f_0^*(h^m_{k,0})} |J(f_0)|^{2m}~dV_{{\ti X_0}} < \infty.
\end{equation} 

\eqref{eq: sing bounded} holds for $k\gg 1$ by Lemma \ref{le: comparing h with h_k'} (4) and \eqref{eq: sing D =B+Upxi}. In fact, first note that $J(f_0)$ is the local equation of $-B_0$, hence $|J(f_0)|=e^{-\vphi_{B_0}}$, where $\vphi_{B_0}$ is the local weight for $\hbar_{B_0}$. Thus
\[
\|f_0^*\alpha\|^2_{f_0^*(h_{k,0}^{m})}|J(f_0)|^{2m}=\|f_0^*(\theta(\alpha))\|^2 \cdot e^{-2mf_0^*\vphi_{k,0}} \cdot e^{-2m\vphi_{B_0}},
\] where $\vphi_{k,0}$ is the local weight for $h_{k,0}$. By Lemma \ref{le: comparing h with h_k'} (4),
\[
f_0^*\vphi_{k,0}+\frac 1 k \vphi_{E_0} \approx \frac 1 k \vphi_{F_0}+\vphi_{\Upxi_0}.
\] By \eqref{eq: sing D =B+Upxi}, $mB_0 = mD_0 -m\Upxi_0$. Thus
\begin{equation}\label{eq: integrable}
\begin{split}
-mf_0^*\vphi_{k,0} - m\vphi_{B_0} &\approx m(-\frac 1 k \vphi_{F_0}-\vphi_{\Upxi_0}+\frac 1 k \vphi_{E_0})- m\vphi_{D_0} +m\vphi_{\Upxi_0} \\
& \approx m(-\frac 1 k \vphi_{F_0}+ \frac 1 k \vphi_{E_0} - \vphi_{D_0}). 
\end{split}
\end{equation} Note $-D_0 \geq 0$, and thus for a fixed $m$, we can take $k \gg 1$ such that the integrability of \eqref{eq: sing bounded} holds. In fact, it is enough to choose $k$ such that 
\[
\frac{2m}{k} \nu(\Theta_{\hbar_{F_0}}(F_0), y) <1 \text{~for all~} y \in \ti X_0,
\] where $\nu(\Theta_{h_{F_0}}(F_0), y)$ is the Lelong number of the curvature current at $y$.
\end{proof}

The following remark explains the crucial point of Theorem \ref{thm: AG sing trivial boundary extension}.

\begin{remark}\label{rmk: Paun's original thm is not enough}
Even in the smooth case, for the metric  $h=h_k$ constructed in the proof of Theorem \ref{thm: AG sing trivial boundary extension}, we may have
\[
H^0(X_0, \Oo_{X_0}(m(K_{X_0}+ M|_{X_0})))\subsetneqq H^0(X_0, \Oo_{X_0}(m(K_{X_0}+M|_{X_0})) \otimes \Ii(h|_{X_0})).
\] Hence \cite[Theorem 1]{Pau07} does not apply. In fact, under the notation of the proof of Theorem \ref{thm: AG sing trivial boundary extension}, assuming that $X_0$ is smooth, by the change-of-variables formula, we have
\[
\int \|\alpha\|_{h_0^m}^2 ~dV_{{X_0}}=
\int \|f_0^*\alpha\|_{f_0^*h_0^m}^2 |J(f_0)|^2~dV_{{\ti X_0}},
\] where $J(f_0)$ is the local equation of $K_{\ti X_0}-f_0^*K_{X_0}=-B_0\geq 0$ (c.f. \eqref{eq: sing bounded}). If $\ti \theta: \Oo(f_0^*(L|_{X_0}))|_{f_0^{-1}(U)} \simeq \Oo_{f_0^{-1}(U)}$ is a trivialization, then it becomes
\[
\int |\ti\theta(f_0^*\alpha)|^2e^{-2mf_0^*\vphi_{k,0}}e^{-2\vphi_{B_0}}~dV_{{\ti X_0}}.
\] However (c.f. \eqref{eq: integrable}),
\[
\begin{split}
-mf_0^*\vphi_{k,0}-\vphi_{B_0} &\approx m(-\frac 1 k \vphi_{F_0}-\vphi_{\Upxi_0}+\frac 1 k \vphi_{E_0})- \vphi_{D_0} +\vphi_{\Upxi_0}\\
& \approx m(-\frac 1 k \vphi_{F_0}+ \frac 1 k \vphi_{E_0} )- \vphi_{D_0}-(m-1)\vphi_{\Upxi_0}.
\end{split}
\] Note $\Upxi_0 \geq 0$, for $m\gg 1$, we do not have the integrability.

The new extension theorem works in this setting because the integrability requirement is for
\[
\int \|f_0^*\alpha\|^2_{f_0^*(h_{k,0}^{m})}|J(f_0)|^{2m} ~dV_{{\ti X_0}}.
\]  The extra $|J(f_0)|^{2m}$ makes the integral finite.
\end{remark}

\begin{proof}[Proof of Corollary  \ref{cor: sing invariant of plurigenera for g-pair}]
If $\Ff$ is a sheaf of $\Oo_\De$-module and $t\in \De$ is a  point, then $\Ff \otimes \Cc(t)=\Ff_t/m_t\Ff_t$, where $m_t\subset\Oo_{\De,t}$ is the maximal ideal corresponding to $t$ and $\Cc(t)\coloneqq \Oo_{\De,t}/m_t$. The following argument is similar to \cite[Proof of Theorem 1.1]{Tak07}.

Let $L=mM$ and $L_t=L|_{X_t}$. By the same argument as Lemma \ref{le: meaning of extension}, there is a natural map
\[
\pi_*\Oo_X(mK_{X}+L) \to H^0(X_t, \Oo_{X_t}(mK_{X_t}+L_t))
\] which is surjective by Theorem \ref{thm: AG sing trivial boundary extension}. For $\alpha \in m_t$ and $\sigma \in (\pi_*\Oo_X(mK_{X}+L))_t$, $(\alpha\otimes \sigma)|_{X_t}=0$, thus the above map induces the surjective map
\begin{equation}\label{eq: iso}
\pi_*\Oo_X(mK_{X}+L) \otimes \Cc(t) \to H^0(X_t, \Oo_{X_t}(mK_{X_t}+L_t)).
\end{equation} We show that this map is also injective. Let $U= X - {\rm Sing} X_t$. We claim that the following natural maps give a short exact sequence
\begin{equation}\label{eq: U SEC}
\begin{split}
0& \to \Oo_U(mK_X+(m-1)X_t+L) \to \Oo_U(mK_X+mX_t+L) \\
&\to\iota_* \Oo_{U \cap X_t}(mK_{X_t}+L_t) \to 0,
\end{split}
\end{equation} where $\iota: U \cap X_t \to U$. It is enough to check the exactness on stalks. Let $z \in U$. If $z \not\in X_t$, then $\iota_* \Oo_{U \cap X_t}(mK_{X_t}+L_t)_z=0$ and $\Oo_U(mK_X+(m-1)X_t+L)_z \simeq \Oo_U(mK_X+mX_t+L)_z$ as we can locally invert the defining equation of $X_t$. If $z \in U \cap X_t$, then by the choice of $U$, $z$ is a smooth point of $X_t$. As $X_t$ is Cartier, there is a smooth open set of $U$ which contains $z$. Then the exactness follows. Let $j: U \hookrightarrow X$. Pushing forward \eqref{eq: U SEC}, we have
\[
\begin{split}
0& \to \Oo_X(mK_X+(m-1)X_t+L) \to \Oo_X(mK_X+mX_t+L) \\
&\to j_*\iota_* \Oo_{U \cap X_t}(mK_{X_t}+L_t).
\end{split}
\] As $\codim_{X_t}(X_t - U\cap X_t) \geq 2$, the natural map
\[
\eta_*\Oo_{X_t}(mK_{X_t}+L_t) \to j_*\iota_* \Oo_{U \cap X_t}(mK_{X_t}+L_t)
\] is an isomorphism, where $\eta: X_t \to X$. In conclusion, there is an exact sequence
\[
0 \to \Oo_X(mK_X+(m-1)X_t+L) \rightarrow \Oo_X(mK_X+mX_t+L) \to \eta_*\Oo_{X_t}(mK_{X_t}+L_t).
\] Pushing forward by $\pi$ and taking the stalk at $t$, we have
\[
\begin{split}
0 &\to \pi_*\Oo_X(mK_X+(m-1)X_t+L)_t \rightarrow \pi_*\Oo_X(mK_X+mX_t+L)_t \\
&\to H^0(X_t, \Oo_{X_t}(mK_{X_t}+L_t)).
\end{split}
\] Let $\Ff \coloneqq \pi_*\Oo_X(mK_X+(m-1)X_t+L)$ and $\Gg\coloneqq\pi_*\Oo_X(mK_X+mX_t+L)$, then
\[
\begin{split}
&\Ker(\pi_*\Oo_X(mK_X+mX_t+L) \otimes \Cc(t) \to H^0(X_t, \Oo_{X_t}(mK_{X_t}+L_t)))\\
\simeq &(\Ff_t+m_t\Gg_t)/m\Gg_t.
\end{split}
\] We claim that $(\Ff_t+m_t\Gg_t)/m_t\Gg_t=0$. In fact, as $m_t=(z-t)\Oo_{\De,t}$, $\pi^*(z-t)$ is the defining equation of $X_t$, we have $\Ff_t\subset m_t\Gg_t$. Because $X_t \sim 0$, we have $\Oo_X(mK_X+(m-1)X_t + L) \simeq \Oo_X(mK_X + L)$. Thus \eqref{eq: iso} is an isomorphism.

Note that $\pi_*\Oo_X(mK_X+L)$ is a coherent sheaf. By the upper semi-continuity of 
\[
\dim_\Cc(\pi_*\Oo_X(mK_X+L)\otimes \Cc(t))
\] and the isomorphism \eqref{eq: iso}, $h^0(X_t, \Oo_{X_t}(mK_{X_t}+L_t))$ is upper semi-continuous. Fix  a $t_0 \in \De$, any section of $H^0(X_{t_0}, \Oo_{X_{t_0}}(mK_{X_{t_0}}+L_{t_0}))$ extends over $X$ by Theorem \ref{thm: AG sing trivial boundary extension}. Hence
\[
h^0(X_{t_0}, \Oo_{X_{t_0}}(mK_{X_{t_0}}+L_{t_0})) \leq h^0(X_t, \Oo_{X_t}(mK_{X_t}+L_t))
\] for a general $t \in \De$. Thus, $h^0(X_t, \Oo_{X_t}(mK_{X_t}+L_t))$ must be a constant for all $t\in \De$. 
\end{proof}

\subsection{Further discussions}

The following example shows that nefness of $M$ along does not guarantee the invariance of plurigenera. 

\begin{example}\label{eg: M nef}
Let $A/\Cc$ be an abelian variety and $A^\vee$ be its dual abelian variety. If $\Pic^0(A)$ is the identity component of the Picard variety of $A$, then $\Pic^0(A) = A^\vee$. Let $\mathcal{P}$ be the Poincar\'e bundle on $A \times A^\vee$. Suppose that $0\in A^\vee$ corresponds to $\mathcal P_0 \simeq \Oo_A$. Let $0\in \De \subset A^\vee$ be a disc containing $0$. Let $X = A \times \De$ and $\mathcal P_\De \coloneqq \mathcal P|_{X}$ be a line bundle on $X$. $\mathcal P_\De$ is nef over $\De$ as $\mathcal P_t$ is numerically trivial for each $t\in \De$. Moreover, $\Oo(K_{X}) = \Oo_{X}$. Note that for an abelian variety, a line bundle $\Ll \in \Pic^0(A)$ has global sections if and only if $\Ll\simeq \Oo_A$ (see \cite[Page 76, (vii)]{Mum70}). Therefore, for each $m\in \Nn$,
\[
h^0(X_{X_t}, \Oo_{X_t}(mK_{X_t})\otimes \mathcal P_\De^{\otimes m}|_{X_t})=\begin{cases}
1, & \text{ if~ } t=0,\\
0, & \text{ if~ } t \in \De-\{0\}.
\end{cases}
\] 
\end{example}

Next, recall that for a non-smooth family of varieties, we have
\begin{theorem}[{\cite[Theorem 1.1]{Tak07}}]
Let $\pi: X \to C$ be a proper surjective algebraic morphism with connected fibers from a complex variety $X$ to a smooth curve $C$. Assume that every fiber $X_t=\pi^{-1}(t)$ has only canonical singularities. Then $h^0(X_t, \Oo_{X_t}(mK_{X_t}))$ is independent of $t\in C$ for any positive integer $m$.
\end{theorem}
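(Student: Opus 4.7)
The plan is to treat this result as essentially a special case of the g-pair extension theorem with trivial nef part, then run the same cohomological argument used in Corollary \ref{cor: sing invariant of plurigenera for g-pair}. Since the statement is local on the base curve $C$, I would fix $t_0 \in C$ and replace $C$ by a small disc $\De \ni t_0$; invariance on $\De$ then yields invariance on $C$. The first substantive step is to upgrade the fiberwise canonical hypothesis to one on the total space: by Kawamata's deformation invariance of canonical singularities \cite[Main Theorem]{Kaw99}, after shrinking $\De$ around $t_0$ the total space $X$ itself has canonical singularities. This is precisely the reduction already invoked at the beginning of the proof of Theorem \ref{thm: AG sing trivial boundary extension}.

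Given this, I would apply Theorem \ref{thm: sing extension} to the trivial Cartier divisor $L=0$ equipped with the trivial metric $h$ (local weight $\vphi\equiv 0$). A direct reading of Definition \ref{def: G_m} shows that in this setting $\Gg_m(h)=\Oo_X$: for any resolution $\mu\colon W \to X$ and any local holomorphic $\alpha$, the integrand $|\mu^*\alpha|^2|J(\mu)|^{2m}$ is the square-modulus of a holomorphic function on $W$ (recall that $J(\mu)$ is a holomorphic multiple-valued function when $X$ has canonical singularities), hence locally integrable. The same computation on $X_{t_0}$ gives $\Gg_m(h|_{X_{t_0}}) = \Oo_{X_{t_0}}$, so
\[
H^0(X_{t_0},\Oo_{X_{t_0}}(mK_{X_{t_0}}) \otimes \Gg_m(h|_{X_{t_0}})) = H^0(X_{t_0},\Oo_{X_{t_0}}(mK_{X_{t_0}})).
\]
Theorem \ref{thm: sing extension} then yields that every section of $H^0(X_{t_0}, \Oo_{X_{t_0}}(mK_{X_{t_0}}))$ extends over $X$ in the sense of Lemma \ref{le: meaning of extension}.

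From here the invariance of $h^0$ is the standard coherence-plus-semicontinuity argument executed verbatim as in the proof of Corollary \ref{cor: sing invariant of plurigenera for g-pair}. Concretely, using that $X_t$ is a Cartier divisor on the Gorenstein canonical $X$ and $X_t \sim 0$, one extracts a short exact sequence on the smooth locus of $X_t$, pushes it forward and uses $\codim_{X_t}(X_t \setminus U\cap X_t) \geq 2$, to obtain that the natural map
\[
\pi_*\Oo_X(mK_X) \otimes \Cc(t) \longrightarrow H^0(X_t,\Oo_{X_t}(mK_{X_t}))
\]
is an isomorphism for every $t$. Coherence of $\pi_*\Oo_X(mK_X)$ then gives upper semi-continuity of $h^0(X_t,\Oo_{X_t}(mK_{X_t}))$ in $t$, and the extension property proved above forces the opposite inequality in a punctured neighborhood of $t_0$, so the function is constant.

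The main obstacle (and indeed the only nontrivial analytic input beyond the paper's own extension theorem) is the invocation of Kawamata's deformation theorem for canonical singularities: without it one cannot promote the assumption on fibers to an assumption on $X$, and Theorem \ref{thm: sing extension} does not apply. All remaining ingredients — the triviality of $\Gg_m$ for the trivial metric, the extension of sections, and the semicontinuity argument — are direct adaptations of machinery already developed in this paper.
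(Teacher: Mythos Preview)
The paper does not give its own proof of this statement: it is quoted verbatim from \cite[Theorem 1.1]{Tak07} in the ``Further discussions'' subsection as context, with no argument supplied. So there is nothing to compare your proposal against in the paper itself.

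That said, your proposal is a valid way to \emph{recover} Takayama's theorem from the machinery developed here, and it is worth recording. Taking $L=0$ with the trivial metric in Theorem \ref{thm: sing extension} gives $\Gg_m(h|_{X_{t_0}})=\Oo_{X_{t_0}}$ exactly as you say (the integrand $|\mu_0^*\alpha|^2|J(\mu_0)|^{2m}$ is the modulus-square of a holomorphic function since $X_{t_0}$ is canonical), so every pluricanonical section on $X_{t_0}$ extends; the rest is the coherence/semicontinuity argument copied from the proof of Corollary \ref{cor: sing invariant of plurigenera for g-pair}. Two small points to tidy up: (i) Theorem \ref{thm: sing extension} is stated for a \emph{projective} contraction over a disc, while the quoted theorem only assumes $\pi$ proper algebraic---you should say a word about why the reduction to the projective case is harmless here (or note that Takayama's own argument handles this); (ii) you wrote ``Gorenstein canonical $X$'' but the total space is only $\Qq$-Gorenstein with canonical singularities, which is all the short-exact-sequence argument needs.
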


Such result has been established by \cite[Theorem 6]{Kaw99} under the additional assumption that each fiber is of general type. On the other hand, for klt singularities, local sections of fibers may not lift to global sections (see \cite[Example 4.3]{Kaw99b}). For g-pairs, the additional nef part introduces the singularities even if each $X_t$ is smooth. 

The above discussions show that both assumptions on the abundant nef parts and the g-canonical singularities are indispensable for Theorem \ref{thm: AG sing trivial boundary extension}
 and Corollary \ref{cor: sing invariant of plurigenera for g-pair}.

\bibliographystyle{alpha}
\bibliography{bibfile}
\end{document}